\theoremstyle{plain}
\newtheorem{thm}{Theorem}[section]
\newtheorem*{que2}{Question}
\newtheorem{lem}[thm]{Lemma}
\theoremstyle{definition}
\newtheorem{remark}[thm]{Remark}
\begin{document}
\title[Free Groups as  End Homogeneity Groups of $3$-manifolds]
{Free Groups as  End Homogeneity Groups of $3$-manifolds}
\author{Dennis J. Garity}
\address{Mathematics Department, Oregon State University,
Corvallis, OR 97331, U.S.A.}
\email{garity@math.oregonstate.edu}
\urladdr{http://www.math.oregonstate.edu/\symbol{126}garity}
\author{Du\v{s}an D. Repov\v{s}}
\address{Faculty of Education,
and Faculty of Mathematics and Physics,
University of Ljubljana\\
\& Institute of Mathematics, Physics and Mechanics,
Ljubljana SI-1000, Slovenia}
\email{dusan.repovs@guest.arnes.si}
\urladdr{http://www.fmf.uni-lj.si/\symbol{126}repovs}


\subjclass[2010]{Primary 54E45, 57M30, 57N12; Secondary 57N10, 54F65}

\keywords{Open 3-manifold, rigidity,   manifold end, geometric index, 
Cantor set, homogeneity group, abelian group, defining sequence}

\begin{abstract}
For every finitely generated free group $F$, we construct an
irreducible open $3$-manifold $M_F$ whose end  set is
homeomorphic to a Cantor set,  and with the end homogeneity group of
$M_F$ isomorphic to  $F$. The end homogeneity group is the
group of all self-homeomorphisms of the end set that extend to
homeomorphisms of the entire $3$-manifold. This extends an earlier result that constructs, for each finitely generated abelian group $G$, an irreducible open $3$-manifold $M_G$ with end homogeneity
group $G$. The method used in the proof of our main result also shows that if $G$ is a group with a  Cayley graph in $\mathbb{R}^3$ such that the graph automorphisms have certain nice extension properties,  then there is an irreducible open $3$-manifold $M_G$ with end homogeneity group $G$.
\end{abstract}
\maketitle

\section*{Introduction}
\label{introsection}

Each compact set $K$ in $S^{3}$ (or in $\mathbb{R}^3$) has for its complement an open
$3$-manifold $M^{3}$. There is often a close relation between properties of the embedding of $K$ in $S^3$ and properties of this $3$-manifold complement.   Consider, for example, embeddings of knots and the study of knot complements. See also  \cite{SoSt13} for a construction of a Cantor set with hyperbolic complement.

If the compact set $K$ is a  Cantor set $C$ in $S^{3}$, then the complement of $C$ is an open
$3$-manifold $M^{3}$  with end set $C$. The
embedding of the Cantor set gives rise to properties of the
corresponding complementary $3$-manifold $M^{3}$ and of the end set $C$  of $M^{3}$. See
\cite{GaRe13, GRW14, SoSt13} for examples of
this. There has been  extensive study of embeddings of Cantor sets in $S^{3}$ or $\mathbb{R}^3$. 
The space of embeddings is extremely complicated. 
 See for example \cite{GaKo13}.

In \cite{GaRe14}, the authors showed that for every finitely generated abelian group $G$, there is an irreducible open 3-manifold $M_G$ with end homogeneity group isomorphic to $G$. The end homogeneity group is the group of all end homeomorphisms that extend to the entire 3-manifold. This $3$-manifold is the complement of a carefully constructed Cantor set $C_G$ associated with $G$ in $S^3$. 

For further examples of properties of embeddings related to properties of the containing 3-manifold or of the 3-manifold arising as the complement, see \cite{Mo16} for a relation of certain group presentations to certain wild embeddings of arcs and  \cite{FlWu15} for a construction of certain wild Cantor sets that are Julia sets. See also \cite{GRW18} for another example of the use of embeddings to construct certain 3-manifolds.

In this paper, the authors show that for any finitely generated free group $F$, there is an irreducible open 3-manifold $M_F$ with the end homogeneity group $F$. The proof leads to a technique which shows that if  a group $G$ has a Cayley graph embeddable in $S^3$ in a particularly nice way, then there is an irreducible open 3-manifold $M_G$ with the end homogeneity group isomorphic to $G$. 

In Section \ref{prelimsection} we discuss concepts and results needed in the paper. This includes the concepts of geometric index, local genus, embedding homogeneity groups, end homogeneity groups, Antoine Cantor sets,  and rigidly embedded Cantor sets. In Section \ref{Cayleysection}, we carefully construct a specific Cayley graph in $\mathbb{R}^3$ for the free group on $N$ generators, $F_{N}$.
In Section \ref{Cantorsection} we construct a specific Cantor set in $S^3$ related to the Cayley graph constructed in Section \ref{Cayleysection}. 
In Section \ref{Theoremsection} we prove the main theorems. We end with some open questions.

\section{Preliminaries}
\label{prelimsection}

\subsection{Background Information}
We refer to \cite{GRZ05, GRZ06, GRWZ11, GRW14} for a discussion of
Cantor sets in general and of rigid Cantor sets, and to
\cite{Ze05} for results about local genus of points in Cantor
sets and defining sequences for Cantor sets.  The bibliographies
in these papers contain additional references to results about
Cantor sets. Two Cantor sets $X$ and $Y$ in $\mathbb{R}^3$ (or $S^3$) are  said to be
\emph{equivalent} or \emph{equivalently embedded} if there is a self-homeomorphism of $\mathbb{R}^3$ (or $S^3$) 
taking $X$ to $Y$. If there is no such homeomorphism, the Cantor
sets are said to be \emph{inequivalent}, or \emph{inequivalently
embedded}. A Cantor set $C$ is said to be \emph{rigidly embedded} in $\mathbb{R}^3$ (or $S^3$)
if the only self-homeomorphism of $C$ that extends to a
homeomorphism of $\mathbb{R}^3$ (or $S^3$) is the identity. 

\subsection{Geometric Index}

We list the results which  we will need on geometric index. See Schubert
\cite{Sc53, GRWZ11}  for more details.
If $K$ is a link in the interior of a solid torus $T$,  the \emph
{geometric index} of $K$ in $T$, denoted by $\text{N}(K,T)$, 
is the minimum  of $|K \cap D|$ over
all meridional disks  $D$ of $T$ intersecting $K$ transversely.   
 If $T$ is a solid torus and
$M$ is a finite union of disjoint solid tori so that $M \subset
\text{Int}  \ T$, then the geometric index $\text{N}( M,T)$ of
$M$ in  $T$ is $\text{N}(K,T)$ where $K$ is a core of $M$.

\textbf{Note:} An \emph{unknotted solid torus} is a solid torus $B^2 \times S^1$ with center line $\{0\}\times S^1$ an unknotted circle in $S^3$ .

\begin{thm}  {\rm(}\cite{Sc53}, \cite[Theorem
3.1]{GRWZ11}{\rm)} 
Let $T_0$ and $T_1$ be unknotted solid
tori in $S^{3}$ with  $T_0 \subset \rm{Int}  (T_1)$ and $\rm{N}(
T_0, T_1) = 1$.  Then $\partial T_0$ and  $\partial T_1$ are
parallel; i.e., the 3-manifold $T_1 -  \rm{Int}  (T_0)$ is
homeomorphic to $\partial T_0 \times I$ where $I$ is the closed
unit interval $[0,1]$. 
\end{thm}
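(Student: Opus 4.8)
The plan is to recast the statement as the claim that $W:=T_1\setminus\mathrm{Int}(T_0)$, the region between the two boundary tori, is homeomorphic to $\partial T_0\times I$. First I would exploit the index hypothesis directly: since $N(T_0,T_1)=N(K_0,T_1)=1$, where $K_0$ is a core of $T_0$, there is a meridional disk $D$ of $T_1$ meeting $K_0$ transversally in exactly one point. Put $D$ in general position with respect to $\partial T_0$, so that $D\cap\partial T_0$ is a disjoint union of circles, at least one of which is a meridian of $T_0$ encircling the single puncture point of $K_0$.

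The first real step is to simplify $D\cap T_0$ to a single meridian disk of $T_0$. A circle of $D\cap\partial T_0$ that is inessential on $\partial T_0$ bounds a disk there; choosing an innermost such circle on $D$ and using the irreducibility of the solid tori lets me surger it away. A circle that is essential on $\partial T_0$ also bounds a subdisk of $D\subset T_1$ and is therefore null-homotopic in $T_1$; under $\pi_1(\partial T_0)\to\pi_1(T_1)=\mathbb{Z}$ the meridian of $T_0$ dies while its longitude maps to a generator (this is exactly the content of the index-$1$ condition, forcing $K_0$ to represent a generator), so the only essential curve that can die is a meridian of $T_0$. An innermost such meridian circle is removed or absorbed, leaving $D\cap T_0=D_0$, a single meridian disk of $T_0$. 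Consequently $A:=\overline{D\setminus D_0}$ is a properly embedded annulus in $W$ whose boundary circles are a meridian of $\partial T_1$ and a meridian of $\partial T_0$, and the same argument shows $A$ is incompressible in $W$.

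Next I would cut $T_1$ along $D$, which simultaneously cuts $W$ along $A$ and $T_0$ along $D_0$. Cutting $T_1$ along $D$ produces a $3$-ball $P\cong D^2\times I$ in which the spanning arc $\alpha:=K_0\,|\,D_0$ runs from the face $D^2\times\{0\}$ to the face $D^2\times\{1\}$, and $T_0\,|\,D_0$ becomes a tubular ``tube'' $Q$ about $\alpha$; thus $W\,|\,A=P\setminus\mathrm{Int}(Q)$. The heart of the matter is to show that $\alpha$ is an \emph{unknotted} spanning arc of $P$, equivalently that $Q$ is an unknotted tube. Granting this, $P\setminus\mathrm{Int}(Q)\cong(\text{annulus})\times I$, and regluing the two faces of $P$ (which is how cutting a solid torus along a meridian disk is undone) identifies the top annulus with the bottom annulus and yields $W\cong(\text{annulus})\times S^1\cong(S^1\times I)\times S^1\cong T^2\times I$, with the two boundary tori $\partial T_0$ and $\partial T_1$ as the two ends; hence they are parallel.

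I expect the unknottedness of $\alpha$ to be the main obstacle, since the index condition controls only the algebra of how $K_0$ winds in $T_1$ and not the knotting of the resulting arc: a geometrically index-$1$ core wound in a locally knotted fashion would give a knotted spanning arc and a nonproduct $W$. This is precisely where the hypothesis that both $T_0$ and $T_1$ are \emph{unknotted in $S^3$} is indispensable, and I would invoke it by passing to the complementary unknotted solid tori $T_1'=S^3\setminus\mathrm{Int}(T_1)\subset T_0'=S^3\setminus\mathrm{Int}(T_0)$, noting that $W=T_0'\setminus\mathrm{Int}(T_1')$ is the same region viewed from outside; this symmetry pins down the framing and rules out knotting of the tube. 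An alternative way to finish, bypassing the explicit unknotting analysis, is to check that $W$ is irreducible with incompressible boundary two tori (the meridian of $\partial T_1$ does not bound in $W$ because $N(T_0,T_1)\neq 0$, and the only candidate compressing curve on $\partial T_0$ is its meridian, which leads to a contradiction with irreducibility) and to compute $\pi_1(W)\cong\mathbb{Z}^2$, after which Waldhausen's classification of Haken manifolds identifies $W$ with $T^2\times I=\partial T_0\times I$ from its fundamental group and peripheral structure.
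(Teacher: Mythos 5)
The paper does not actually prove this statement: it is quoted as a classical result, with the proof deferred to Schubert \cite{Sc53} and to \cite[Theorem 3.1]{GRWZ11}. So there is no in-paper argument to compare yours against, and I am judging your proposal on its own terms.

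Your overall strategy --- reduce $D\cap T_0$ to a single meridian disk of $T_0$ by innermost-circle arguments, cut $T_1$ along $D$ to obtain a ball $P$ containing a spanning arc $\alpha$ with tube $Q$, show $\alpha$ is unknotted, and reglue --- is a legitimate classical line of attack, and the reduction of $D\cap T_0$ to a single meridian disk (essential circles of $D\cap\partial T_0$ must be meridians of $T_0$ because they die in $\pi_1(T_1)$, and only one meridian disk can occur since $D$ meets the core once) is essentially correct. The genuine gap sits exactly at the step you yourself flag as the main obstacle: the unknottedness of the spanning arc $\alpha$ in $P$. The sentence ``this symmetry pins down the framing and rules out knotting of the tube'' is an assertion, not an argument; passing to the complementary tori $T_1'\subset T_0'$ restates the problem symmetrically but does not resolve it. Be aware that the unknottedness of the core $K_0$ in $S^3$ only rules out \emph{local} knotting of $\alpha$ (a local knot would be a connected summand of the unknot $K_0$); a properly embedded spanning arc of a ball can fail to be boundary-parallel without being locally knotted, and excluding that possibility is precisely where the index-one hypothesis and the unknottedness of \emph{both} tori must be made to interact --- this is the entire content of the theorem. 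Your fallback via Waldhausen has the same hole in different clothing: irreducibility plus two incompressible torus boundary components does not identify $W$ as $T^2\times I$ (the Whitehead link exterior has all of those properties), so the identification rests entirely on the claim $\pi_1(W)\cong\mathbb{Z}^2$, which you state as something to ``compute'' but never derive. Viewing $W$ as the exterior of the pair of unknotted solid tori $T_0\sqcup T_1'$ with linking number $\pm1$ does not by itself yield $\pi_1(W)\cong\mathbb{Z}^2$; that computation is equivalent to the unknotted-arc claim. Until one of these two equivalent statements is actually proved from the hypotheses, the argument is incomplete at its decisive step.
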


\begin{thm}\label{productindex}  {\rm(}\cite{Sc53}, \cite[Theorem
3.2]{GRWZ11}{\rm)} 
 Let $T_0$ be a finite union of disjoint
solid tori. Let $T_1$ and $T_2$ be solid tori so that $T_0
\subset \rm{Int} ( T_1)$ and $T_1 \subset \rm{Int}  (T_2)$.  Then
$\rm{N}(T_0, T_2) =  \rm{N}(T_0, T_1) \cdot  \rm{N}(T_1, T_2)$.
\end{thm}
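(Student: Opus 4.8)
The plan is to prove the two inequalities $\mathrm{N}(T_0,T_2)\le \mathrm{N}(T_0,T_1)\cdot\mathrm{N}(T_1,T_2)$ and $\mathrm{N}(T_0,T_2)\ge \mathrm{N}(T_0,T_1)\cdot\mathrm{N}(T_1,T_2)$ separately. Throughout I write $p=\mathrm{N}(T_1,T_2)$ and $q=\mathrm{N}(T_0,T_1)$, let $c_1$ denote a core of $T_1$, and recall that $\mathrm{N}(T_1,T_2)=\mathrm{N}(c_1,T_2)$. A point worth stressing at the outset is that the geometric index is genuinely a minimal geometric intersection number, not the absolute value of a homological winding number (Whitehead-type configurations have index $2$ but algebraic winding number $0$); hence neither inequality can be obtained by a naive homology count, and the whole argument must be carried out through explicit isotopies and cut-and-paste of surfaces.

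For the upper bound I would build an economical meridional disk of $T_2$. First choose a meridional disk $\Delta$ of $T_1$ realizing $q$, so that $|\Delta\cap T_0|=q$, and fix a product neighborhood $\Delta\times[-\varepsilon,\varepsilon]\subset \mathrm{Int}(T_1)$; after an isotopy of $T_0$ inside $T_1$ I may assume $T_0$ meets this slab in exactly $q$ vertical strands, so that every parallel copy $\Delta\times\{s\}$ meets $T_0$ in $q$ points while all of the ``clasping'' of $T_0$ takes place in the complementary ball of $T_1$. Next choose a meridional disk $D$ of $T_2$ realizing $p$; as in the cleanup below I may take $D\cap T_1$ to consist of exactly $p$ meridional disks of $T_1$, and by isotoping $T_1$ in $T_2$ I may route these $p$ passages so that they appear as parallel copies $\Delta\times\{s_1\},\dots,\Delta\times\{s_p\}$ inside the slab. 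Then $|D\cap T_0|=pq$, whence $\mathrm{N}(T_0,T_2)\le pq$.

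For the lower bound --- the main obstacle --- I must show that every meridional disk $D$ of $T_2$, transverse to $\partial T_1$ and to $T_0$, satisfies $|D\cap T_0|\ge pq$; since $\mathrm{N}(T_0,T_2)$ is the minimum of $|D\cap T_0|$ over all such disks, this suffices. The idea is to analyze $D\cap\partial T_1$, a collection of disjoint circles in the interior of $D$. First I would remove every circle that is inessential on $\partial T_1$ by an innermost-disk surgery, performed inside a collar of $\partial T_1$ chosen disjoint from $T_0$ (possible since $T_0$ is compactly contained in $\mathrm{Int}(T_1)$); such surgeries can only delete intersection points with $T_0$, so they do not increase $|D\cap T_0|$ and are harmless for a lower bound. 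The remaining circles are all meridians of $\partial T_1$, and the substantive step is to simplify further so that each component of $D\cap T_1$ is a genuine meridional disk: the potentially troublesome annular and higher-connectivity pieces bounded by several meridians must be shown, using incompressibility of $\partial T_1$ in $T_1$ together with the nesting structure of the circles in the disk $D$, to be removable by boundary-compressions that again create no new intersections with $T_0$ and keep $\partial D$ fixed, so that the surgered disk is still a meridional disk of $T_2$.

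Once $D\cap T_1$ is reduced to a disjoint union of meridional disks $\Delta_1,\dots,\Delta_k$ of $T_1$, the conclusion is immediate: each $\Delta_i$ is a meridional disk of $T_1$, so $|\Delta_i\cap T_0|\ge q$ by definition of $\mathrm{N}(T_0,T_1)$, while each $\Delta_i$ meets $c_1$ in exactly one point, so $k=|D\cap c_1|\ge \mathrm{N}(c_1,T_2)=p$ because the surgered disk remains a meridional disk of $T_2$. Summing yields $|D\cap T_0|=\sum_{i=1}^{k}|\Delta_i\cap T_0|\ge kq\ge pq$. I expect the hard part to be precisely the surface-simplification of the previous paragraph --- controlling the complex intersection pieces while keeping track of the $T_0$-intersections --- since this is where the non-homological nature of the index forces a careful geometric argument rather than an algebraic count.
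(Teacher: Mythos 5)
The paper itself offers no proof of this statement---it is quoted from Schubert \cite{Sc53} (via \cite[Theorem 3.2]{GRWZ11})---so I can only judge your attempt on its own terms. Your overall architecture is the standard route to Schubert's theorem: the upper bound via an explicit disk built from a slab $\Delta\times[-\varepsilon,\varepsilon]$ is fine modulo routine isotopies, and the final count $\sum_i |\Delta_i\cap K_0|\ge kq\ge pq$ is the right way to finish, with two small corrections. First, by the paper's definition you should be counting intersections with a core link $K_0$ of $T_0$, not with $T_0$ itself. Second, a meridional disk of $T_1$ meets a \emph{fixed} core $c_1$ in an odd number of points that is at least one, not necessarily exactly one; to get $k=|D\cap c_1|$ you must re-choose the core adapted to the disjoint meridional disks $\Delta_1,\dots,\Delta_k$ (legitimate, since the index does not depend on the choice of core, but it needs saying).

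The genuine gap is exactly the step you yourself flag as ``the substantive step'': reducing $D\cap T_1$ to a disjoint union of meridional disks by moves that never increase the intersection with $K_0$. You assert this can be done ``using incompressibility of $\partial T_1$ in $T_1$'' via boundary-compressions ``that create no new intersections with $T_0$,'' but (i) $\partial T_1$ is \emph{compressible} in $T_1$ (the meridional disk is a compression), so that is not the available tool; (ii) a compressing or boundary-compressing disk for a multi-boundary component of $D\cap T_1$ lives in the interior of $T_1$ and cannot in general be chosen disjoint from $T_0$---unlike your innermost-disk surgeries, which take place in a collar of $\partial T_1$---and since your argument is a chain of inequalities $|D\cap K_0|\ge |D'\cap K_0|\ge\cdots\ge pq$, a single move that might increase the count invalidates the conclusion; and (iii) you never treat the case of a circle of $D\cap\partial T_1$ that is essential on $\partial T_1$ but bounds an innermost subdisk of $D$ lying in $T_2-\mathrm{Int}(T_1)$, which is where one must argue that $\mathrm{N}(T_1,T_2)=0$ and the desired inequality is vacuous. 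This normalization is precisely where the content of the theorem sits (it is the reason the geometric index is not a homological winding number, as you correctly observe at the outset), so describing what ``must be shown'' without showing it leaves the proof incomplete rather than merely unpolished.
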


There is one additional result we will need.

\begin{thm}\label{evenindex}{\rm(}\cite{Sc53}, \cite[Theorem
3.3]{GRWZ11}{\rm)} 
Let $T$ be a solid torus in $S^{3}$ and
let $T_{1},\ldots T_{n}$ be unknotted pairwise disjoint solid tori in 
$T$, each of
geometric index $0$ in $T$. Then the geometric index of
$\bigcup\limits_{i=1}^{n}T_{i}$ in $T$ is even. 
\end{thm}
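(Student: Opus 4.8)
The plan is to reduce the statement to a single mod-$2$ homological intersection computation in the solid torus $T$, with the winding number as the organizing invariant. Write $K=\bigcup_{i=1}^{n}K_{i}$ for the union of cores, so that by definition the geometric index of $\bigcup_{i}T_{i}$ in $T$ is $\mathrm{N}(K,T)=\min_{D}|K\cap D|$, the minimum taken over meridional disks $D$ of $T$ transverse to $K$. The key observation I would use is that for any \emph{fixed} such disk $D$, the geometric count $|K\cap D|$ is congruent modulo $2$ to the algebraic intersection number of $K$ with $D$, since intersection points carrying opposite signs cancel in pairs and only this signed total is homologically constrained.

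First I would record that the algebraic intersection number of $K_{i}$ with a meridional disk is a well-defined invariant $w_{i}$, the winding number, equal to the image of $[K_{i}]$ under $H_{1}(T;\mathbb{Z})\cong\mathbb{Z}$, and in particular independent of the chosen disk. The hypothesis $\mathrm{N}(K_{i},T)=0$ supplies a meridional disk disjoint from $K_{i}$; cutting $T$ along it produces a $3$-ball in which $K_{i}$ is null-homologous, forcing $w_{i}=0$. Next, because winding number is additive over disjoint components, the total winding number of $K$ is $w=\sum_{i=1}^{n}w_{i}=0$. Equivalently, passing to $\mathbb{Z}/2$ coefficients, the class $[K]\in H_{1}(T;\mathbb{Z}/2)\cong\mathbb{Z}/2$ vanishes, and the mod-$2$ intersection pairing $H_{1}(T;\mathbb{Z}/2)\times H_{2}(T,\partial T;\mathbb{Z}/2)\to\mathbb{Z}/2$ then shows that $|K\cap D|\equiv 0\pmod 2$ for every transverse meridional disk $D$. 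Taking the minimum over all such $D$ yields $\mathrm{N}(K,T)\equiv 0\pmod 2$, which is the assertion.

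The step I expect to require the most care is the parity claim itself: that for a fixed disk the geometric intersection count and the signed (homological) count agree modulo $2$, and that the signed count is genuinely disk-independent. I would justify disk-independence by noting that any two meridional disks represent the same class in $H_{2}(T,\partial T)$ and that the pairing with $[K]\in H_{1}(T)$ depends only on these classes, and justify the parity by the standard cancellation-in-pairs argument for oppositely oriented transverse intersections. I would also remark that the unknottedness hypothesis on the $T_{i}$ is not actually needed for this homological argument; what the argument genuinely uses is only that each core has winding number $0$ in $T$, which the geometric-index-$0$ hypothesis already guarantees. Thus the homological proof is slightly more general than the stated theorem, and unknottedness is retained here only because it matches the setting in which the result is later applied.
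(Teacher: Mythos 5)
The paper does not prove this statement at all: it is quoted as a known result, with citations to Schubert and to \cite[Theorem 3.3]{GRWZ11}, so there is no in-paper proof to compare yours against. Judged on its own, your homological argument is correct and is essentially the standard proof of this fact. The logical skeleton is sound: geometric index $0$ for $T_i$ gives a meridional disk disjoint from the core $K_i$, hence $[K_i]=0$ in $H_1(T)$; additivity gives $[K]=0$; Lefschetz/intersection duality then forces the \emph{algebraic} intersection of $K$ with every meridional disk to vanish; the mod-$2$ comparison between geometric and algebraic counts makes $|K\cap D|$ even for \emph{every} transverse meridional disk $D$; and a minimum of a set of even nonnegative integers is even. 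The two points you flag as delicate are exactly the right ones, and both are handled correctly: disk-independence of the signed count follows from all meridional disks representing the same class in $H_2(T,\partial T)$, and the parity comparison is the usual cancellation of oppositely signed intersection points. Your closing remark is also accurate -- unknottedness of the $T_i$ plays no role in this argument, which only needs winding number zero; the hypothesis is carried along in the statement because it is the form in which the result is applied later in the paper (e.g.\ in the index computations inside Lemma \ref{TechLemma}). One tiny presentational point: it is cleaner to get $w_i=0$ directly from the fact that the algebraic intersection with the disjoint disk is trivially zero, rather than via cutting $T$ open into a ball, though both are valid.
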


\subsection{Defining Sequences and Local Genus}

We review the definition and some basic facts from \cite{Ze05}
about  the local genus of points in a Cantor set. Also see \cite{Ze05}
for a discussion of defining sequences.

A \emph{defining sequence} for a Cantor set $X$ in $\mathbb{R}^3$ is a properly nested sequence of handlebodies $(M_i)$ 
such that  $\bigcap M_{i}=X$. Let
${ \mathcal{D}}(X)$ be the set of all defining sequences for a Cantor set $X\subset S^3$.
Let $(M_i)\in{\mathcal{D}}(X)$ be one of these defining sequences. For  $A\subset X$, denote by $M_i^A$
the union of those components of $M_i$ which intersect $A$.
Define 
\[ 
g_A(X;(M_i)) = \sup\{g(M_i^A);\ i\geq0\}\ \ \mbox{ and }
\]
\[\ g_A(X) = \inf\{ g_A(X;(M_i));\ (M_i) \in
{\mathcal{D}}(X)\}
\] 
where $g(M_i^A)$ is the genus of $M_i^A$. 
The number $g_A(X)$ is called \emph{the
genus of the Cantor set $X$ with respect to the subset $A$}. For
$A=\{x\}$ we call the number $g_{\{x\}}(X)$  \emph{the local
genus of the Cantor set $X$ at the point $x$} and denote it by
$g_x(X)$.

\begin{remark}The genus measures the minimum genus of handlebodies $M_i$ needed in any defining sequence for the Cantor set. The standard Cantor set has genus $0$  at each point since it can be defined as an intersection of genus $0$ balls. An Antoine Cantor set (illustrated in Figure \ref{Antoine}) has genus 1 at each point since it is defined by genus 1 manifolds (solid tori), but cannot be defined by genus 0 manifolds (balls).
\end{remark}
\begin{remark}
\label{genus}Let $x$ be an arbitrary point of a Cantor set $X$ and $h\colon
S^3\to S^3$ a homeomorphism. Then the local genus $g_x(X)$ is the
same as the local genus $g_{h(x)}(h(X))$. Also note that if $x\in
C\subset C^{\prime}$, then the local genus of $x$ in $C$ is less
than or equal to the local genus of $x$ in $C^{\prime}$.  See
\cite[Theorem 2.4]{Ze05}.
\end{remark}

The following result from \cite{Ze05} is needed to show that
certain points in our examples have local genus $> 1$.

\begin{thm}\cite{Ze05} \label{Slicing} 
Let $X,Y\subset S^3$ be
Cantor sets and $p\in X\cap Y$.  Suppose there exists a 3-ball
$B$ and a 2-disc $D\subset B $ such that

\begin{enumerate}
\item $p\in\rm{Int} B$, $\partial D=D\cap\partial B$, $D\cap (X\cup
Y)=\{p\}$; and

\item $X\cap B\subset B_X\cup\{p\}$ and $Y\cap B\subset
B_Y\cup\{p\}$ where $B_X$ and $B_Y$ are the components of
$B -  D$.
\end{enumerate}

Then $g_p(X\cup Y)=g_p(X)+g_p(Y)$.
\end{thm}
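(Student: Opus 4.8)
The plan is to establish the two inequalities $g_p(X\cup Y)\le g_p(X)+g_p(Y)$ and $g_p(X\cup Y)\ge g_p(X)+g_p(Y)$ separately, working throughout with the local structure at $p$. Since the local genus depends only on the germ of the Cantor set at $p$ (cf. the monotonicity in Remark~\ref{genus}), it may be computed from nested sequences of handlebody neighborhoods $(H_i)$ of $p$ with $p\in\operatorname{Int}H_i$, $\bigcap_i H_i=\{p\}$, and $\partial H_i$ disjoint from the relevant Cantor set; such local defining sequences arise as the $p$-components of genuine defining sequences, and conversely any one extends to a global defining sequence without affecting the genus of the $p$-component. After shrinking $B$ if necessary, I would also record three standard facts: a handlebody is irreducible; cutting a handlebody along a disjoint union of properly embedded discs yields handlebodies whose genera sum to at most the original genus (a short Euler-characteristic count); and the boundary connected sum of two handlebodies has genus equal to the sum of the genera.

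For the upper bound I would choose local defining sequences $(A_i)$ for $X$ at $p$ and $(C_i)$ for $Y$ at $p$ realizing $g_p(X)$ and $g_p(Y)$. For large $i$ both lie in $B$, and since the germ of $X$ lies in $B_X\cup\{p\}$ and that of $Y$ in $B_Y\cup\{p\}$, I can isotope $A_i$ into $B_X$ and $C_i$ into $B_Y$, each keeping a small collar across $D$ so that $p$ remains interior, arranged so that $A_i\cap C_i$ is a single ball meeting the two boundaries in a disc near $p$. Then $W_i=A_i\cup C_i$ is a boundary connected sum, hence a handlebody with $g(W_i)=g(A_i)+g(C_i)$, and $(W_i)$ is a local defining sequence for $X\cup Y$ at $p$. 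Therefore $g_p(X\cup Y)\le\sup_i g(W_i)\le\sup_i g(A_i)+\sup_i g(C_i)=g_p(X)+g_p(Y)$.

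The lower bound is the heart of the matter. Given any defining sequence $(M_i)$ for $X\cup Y$, let $M_i^p$ be the component containing $p$; for large $i$ it lies in $B$, and since $D\cap(X\cup Y)=\{p\}$, a compactness argument shows that $D\cap M_i^p$ sits in an arbitrarily small subdisc of $D$ about $p$. I would then isotope $D$ rel $\{p\}$, keeping it disjoint from $(X\cup Y)\setminus\{p\}$, by innermost-disc moves: every curve of $D\cap\partial M_i^p$ bounding a subdisc of $D$ whose interior avoids $p$ (hence avoids $X\cup Y$) can be removed, after which $D\cap M_i^p$ is a disjoint union of properly embedded discs. Because $D$ separates $B$ into $B_X$ and $B_Y$, cutting $M_i^p$ along $D\cap M_i^p$ splits it into its $\overline{B_X}$-part and $\overline{B_Y}$-part; taking the components $P_i$ and $Q_i$ containing $p$, with $p$ pushed into their interiors (into $B_Y$ and $B_X$ respectively, so that the pushed discs avoid $X$ and $Y$ in turn), gives handlebody neighborhoods of the germs of $X$ and of $Y$ at $p$ satisfying $g(M_i^p)\ge g(P_i)+g(Q_i)$ by the Euler-characteristic count.

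The remaining obstacle is that genus need not be monotone along a nested sequence, so one cannot simply pass to a deep level where both $g(P_i)$ and $g(Q_i)$ are large. I would resolve this with a liminf argument: $(P_i)$ is a local defining sequence for $X$ at $p$, and since every subsequence of it is again one, choosing a subsequence along which $g(P_i)$ equals its liminf shows $g_p(X)\le\liminf_i g(P_i)$, and likewise $g_p(Y)\le\liminf_i g(Q_i)$. Superadditivity of $\liminf$ then yields, for every defining sequence $(M_i)$, the bound $\sup_i g(M_i^p)\ge\liminf_i g(M_i^p)\ge\liminf_i g(P_i)+\liminf_i g(Q_i)\ge g_p(X)+g_p(Y)$, and taking the infimum over all defining sequences gives $g_p(X\cup Y)\ge g_p(X)+g_p(Y)$. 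The main difficulty I anticipate is making the innermost-disc simplification fully rigorous while keeping $D$ disjoint from $(X\cup Y)\setminus\{p\}$ throughout, that is, verifying that each ball or collar swept by the isotopies genuinely misses the Cantor set; once that is in place, the liminf step is the clean device that defeats the non-monotonicity of genus.
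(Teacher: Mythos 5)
First, a point of comparison: the paper does not prove Theorem \ref{Slicing} at all --- it is quoted from \cite{Ze05} and used as a black box --- so there is no in-paper argument to measure your proposal against. Judged on its own terms, your architecture is the standard (and, I believe, the source's) one: prove the two inequalities separately, obtain $\le$ by boundary-connect-summing handlebody neighborhoods of the two germs across a small ball at $p$, and obtain $\ge$ by cutting the $p$-component $M_i^p$ of an arbitrary defining sequence along $D$ and counting Euler characteristics. Your $\liminf$ device for defeating the non-monotonicity of genus along a nested sequence is correct and is exactly the right way to finish: the genera are integers, so a finite $\liminf$ is attained along a subsequence, and a subsequence of a local defining sequence is again one.

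There is, however, a genuine gap, and it is precisely the step you flag at the end --- but it is not merely a matter of ``making the innermost-disc simplification fully rigorous'': the move you propose fails as stated. A curve of $D\cap\partial M_i^p$ bounding a subdisc of $D$ whose interior avoids $p$ cannot in general be removed by an isotopy of $D$, because the curve may be essential on the genus-$g$ surface $\partial M_i^p$; that subdisc is then a compressing disc for $M_i^p$ or for its complement, and no isotopy of $D$, constrained or not, eliminates it. If such curves persist, $D\cap M_i^p$ has non-disc components and your Euler-characteristic count is simply false for the resulting cut: a solid torus cut along a boundary-parallel annulus yields two solid tori of total genus $2>1$, so $g(P_i)+g(Q_i)\le g(M_i^p)$ does not follow from cutting along a general planar surface. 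What is actually needed is to surger the configuration rather than isotope $D$: an innermost subdisc of $D$ missing $X\cup Y$ and properly embedded in $M_i^p$ should be used to compress the handlebody (producing a new admissible defining sequence of no larger genus, with care taken to preserve nesting with the later stages), and one iterates until only disc components remain. The same unproved reduction also sits inside your upper bound, where you assert that $A_i$ ``can be isotoped into $B_X$ keeping a small collar across $D$'': you only know that the compact set $X\cap A_i^p$ lies in $B_X\cup\{p\}$, while the handlebody $A_i^p$ itself may protrude essentially into $B_Y$, and pushing it off $D$ without raising genus requires the same cut-along-discs machinery. Until these two reductions are supplied, the proposal is an accurate outline of the argument in \cite{Ze05} rather than a proof of it.
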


\subsection{Embedding Homogeneity Groups and End Homogeneity Groups}
For background on Freudenthal compactifications and theory of
ends, see \cite{Di68, Fr42, Si65}. For an
alternate proof using defining sequences of the result that every
homeomorphism of the open $3$-manifold extends to a homeomorphism
of its Freudenthal compactification, see \cite{GaRe13}.

Each Cantor set $C$ in $S^{3}$ has for its complement an open
$3$-manifold $M^{3}$  with end set $C$. The
embedding of the Cantor set gives rise to properties of the
corresponding complementary $3$-manifold $M^{3}$. See
\cite{GaRe13, GRW14, SoSt13} for examples of
this.

We investigate possible group actions on the end set $C$ of the
open $3$-manifold $M^{3}$ in the following sense:
The
\emph{homogeneity group of the end set }is the group of
homeomorphisms of the end set $C$ that extend to homeomorphisms
of the open $3$-manifold $M^{3}$. Referring specifically to the
embedding of the Cantor set, this group can also be called the
\emph{embedding homogeneity group of the Cantor set}. See 
\cite{Di11, vM11} for a discussion and overview of some
other types of homogeneity.

The standardly embedded Cantor set is at one extreme here. The
embedding homogeneity group is the full group of 
self-homeomorphisms of the Cantor set, an extremely rich group (there
is such a homeomorphism taking any countable dense subset to any
other). Cantor sets with this full embedding homogeneity group
are called \emph{strongly homogeneously embedded.} See Daverman
\cite{Da79} for an example of a non-standard Cantor set with this
property.

At the other extreme are \emph{rigidly embedded} Cantor sets,
i.e. those Cantor sets for which only the identity homeomorphism
extends. Shilepsky \cite{Sh74} constructed Antoine type
\cite{An20} rigid Cantor sets. Their rigidity is a consequence of
Sher's result \cite{Sh68} that if two Antoine Cantor sets are
equivalently embedded, then the stages of their defining
sequences must match up exactly. Newer examples
\cite{GRZ06, GRWZ11} of non-standard Cantor sets were
constructed  that are both rigidly embedded and have simply
connected complement. See \cite{Wr86} for  additional examples of rigidity.

These examples naturally lead to the question of which types of
groups can arise as end homogeneity groups between the two
extremes mentioned above. In this paper we show that for each
finitely generated free group $G$, there is an irreducible
open 3-manifold with the end set homeomorphic to a Cantor set
and with the end homogeneity group isomorphic to $G$.  

\begin{remark}
\label{irreducible}The Cantor sets produced are \emph{unsplittable} as are the set
of ends, in the sense that no $2$-sphere separates points of the
Cantor set (respectively, points of the end set). Correspondingly, the complements of the Cantor sets produced are irreducible in the sense that every 2-sphere in the complement bounds a 3-ball.
\end{remark}

\subsection{Antoine Cantor Sets}
An Antoine Cantor set is described as follows. Let $M_{0}$ be an
unknotted solid torus in $S^{3}$. Let $M_{1}$ be a chain of at
least four linked, pairwise disjoint, unknotted solid tori in $M_{0}$ 
as in Figure
\ref{Antoine}. Inductively, $M_{i}$ consists of pairwise
disjoint solid tori in $S^{3}$ and $M_{i+1}$ is obtained from
$M_{i}$ by placing a chain of at least four linked, pairwise disjoint, unknotted
solid tori in each component of $M_{i}$. If the diameter of the
components goes to $0$, the Antoine Cantor set is
$C=\bigcap\limits_{i=0}^{\infty}M_{i}$.

\begin{figure}[h]
\begin{center}
    \includegraphics[width=0.45\textwidth]{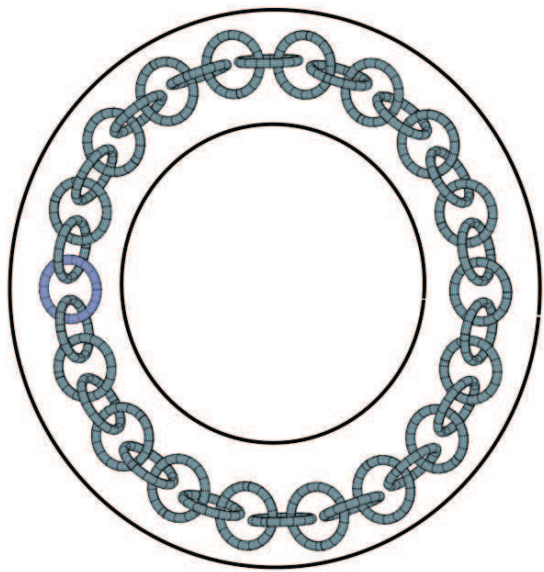}
\end{center}
 \caption{%
    Two Stages for an Antoine Construction}%
\label{Antoine}
\end{figure}

We refer to Sher's paper \cite{Sh68}  for basic results and
description of Antoine Cantor sets. The key result we will need is the
following:
\begin{thm}\cite[Theorems 1 and 2]{Sh68}\label{Sher} 
Suppose $C$
and $D$ are Antoine Cantor sets in $S^{3}$ with defining
sequences $\{M_{i}\}$ and $\{N_{i}\}$, respectively. Then $C$ and $D$   are equivalently embedded if and only if there is a 
self-homeomorphism $h$ of $S^{3}$ with $h(M_{i})=N_{i}$ for each $i$.
\end{thm}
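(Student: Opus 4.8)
The plan is to prove the two implications separately, with essentially all of the work in the forward (``only if'') direction. The reverse implication is immediate: if $h$ is a self-homeomorphism of $S^3$ with $h(M_i)=N_i$ for every $i$, then
\[
h(C)=h\Bigl(\bigcap_i M_i\Bigr)=\bigcap_i h(M_i)=\bigcap_i N_i=D,
\]
so $C$ and $D$ are equivalently embedded.

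For the forward direction, suppose $g\colon S^3\to S^3$ is a homeomorphism with $g(C)=D$. I would not use $g$ itself as the matching homeomorphism; instead I would use it only to extract combinatorial data and then build a new homeomorphism $h$ by hand. First I would replace $g$ by a tame (piecewise-linear) approximation, which is legitimate in dimension $3$ because the defining tori are tame, so that images of tori can be made transverse to meridional disks and the geometric index is available. The point is that a self-homeomorphism of $S^3$ preserves geometric index and the linking numbers of disjoint cores, since it carries solid tori to solid tori and meridians to meridians; hence these numerical invariants of the embedding of $C$ are transported to the corresponding invariants of $D$.

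The heart of the argument is an inductive matching of the two defining sequences, which I expect to be the main obstacle. Writing $n_k$ and $n_k'$ for the chain lengths used at stage $k$ in $\{M_i\}$ and $\{N_i\}$, I would show by induction on $k$ that $g$ carries the clopen pieces $\{A\cap C : A\text{ a component of }M_k\}$ bijectively onto the pieces $\{A'\cap D : A'\text{ a component of }N_k\}$, compatibly with the nesting, and that corresponding pieces have equal chain length with matching cyclic adjacency; in particular $n_k=n_k'$. The evenness (Theorem~\ref{evenindex}) and multiplicativity (Theorem~\ref{productindex}) of the geometric index control how the index of a sub-union of chain tori behaves through the nesting, and together they prevent the homeomorphic image of a single stage-$k$ piece from being split across several components or several pieces from being merged into one; meanwhile the linking numbers $\pm1$ of consecutive chain tori and $0$ of non-consecutive ones detect the cyclic adjacency, and the hypothesis that every chain has at least four tori makes this linking essential and the adjacency cycle rigid. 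This is the delicate step, and it is exactly where Sher's original analysis is concentrated.

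Finally, once the chain-length sequences and adjacency data are known to agree, I would construct $h$ directly, from the outside in, as a limit of homeomorphisms $h_k$ with $h_k(M_i)=N_i$ for $i\le k$. Since $M_0$ and $N_0$ are both unknotted solid tori, there is a homeomorphism of $S^3$ taking one onto the other; and whenever a component $A$ of $M_k$ has been matched to a component $A'$ of $N_k$, the standard Antoine necklace of $n_k=n_k'$ tori inside $A$ can be carried onto the corresponding necklace inside $A'$ by a homeomorphism supported near $A$ and fixing $\partial A$, because for the standard chains of the definition the embedded pattern in a solid torus is determined up to homeomorphism by its length. Arranging the supports of the successive modifications to have diameters tending to $0$, the limit $h=\lim_k h_k$ is a self-homeomorphism of $S^3$ with $h(M_i)=N_i$ for all $i$, whence $h(C)=D$, completing the proof.
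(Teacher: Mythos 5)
The paper offers no proof of this statement to compare against: Theorem \ref{Sher} is imported verbatim from Sher \cite{Sh68} (his Theorems 1 and 2) and is used as a black box, so your attempt can only be measured against Sher's original argument and against the way the present paper deploys the same machinery elsewhere. Your reverse implication is correct and is indeed immediate. In the forward direction you name the right invariants --- multiplicativity and evenness of the geometric index (Theorems \ref{productindex} and \ref{evenindex}) together with the linking of consecutive necklace tori --- and you correctly locate the hard step in the inductive claim that the image of a stage-$k$ piece can neither be split across, nor absorb, several pieces of the other defining sequence. That is the same circle of ideas Sher uses and that this paper itself redeploys in Lemma \ref{TechLemma} and in Step 1 of the proof of Theorem \ref{MT 2}, so the strategy is sound even though the inductive step is asserted rather than carried out.

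There is, however, a genuine gap in your final step, where you discard $g$ and try to rebuild $h$ from purely combinatorial data (chain lengths and cyclic adjacency). The conclusion $h(M_i)=N_i$ is a statement about the actual geometric defining sequences, and the definition of an Antoine Cantor set only requires each stage to be a chain of at least four linked, pairwise disjoint, unknotted solid tori; it does not pin down the isotopy class of that chain inside its parent torus. Your assertion that ``the embedded pattern in a solid torus is determined up to homeomorphism by its length'' --- and what your limit construction actually needs is determination up to a homeomorphism fixing the boundary of the parent torus --- is an unproved and nontrivial lemma: a priori two chains with the same length and adjacency could differ by framings of the links or by how the chain sits in the parent torus, and then no boundary-fixing homeomorphism matches them even though all your extracted data agree. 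Sher's argument does not take this detour: he retains the given homeomorphism $g$ and adjusts it stage by stage by ambient isotopies until it carries $M_i$ onto $N_i$, using the fact that once corresponding components are known to contain the same clopen pieces of $D$ they have geometric index one in one another, so that the parallel-boundary theorem lets one push one chain onto the other; this is also what Remark \ref{isotopy} in the paper is alluding to. To repair your write-up, either prove the rel-boundary uniqueness of the necklace pattern you are invoking, or keep $g$ in play as Sher does.
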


In particular, the number and adjacency of links in the chains
must match up at each stage.
\begin{remark}
\label{separation}
A standard argument shows that Antoine Cantor set cannot be
separated by any $2$-sphere. 
\end{remark}

\begin{remark}
\label{isotopy} Also note that the homeomorphism in Theorem
\ref{Sher} can be realized as the final stage of an ambient  isotopy in $S^3$ since
each of the homeomorphisms in the argument can be realized by an
ambient isotopy.
\end{remark}

\section{A Cayley Graph for $F_{N}$}
\label{Cayleysection}
Let $F_{N}$ be the free group on generators, $a_1,\ldots,a_N$. Denote the inverse of $a_i$ by $\overline{\mathstrut a_i}$ and let $g(F_{N})$, the full generating set of $F_{N}$, be $\{a_1,\ldots,a_N, \overline{\mathstrut a_1},\ldots, \overline{\mathstrut a_N}\}$.  For economy of notation, let $\beta_j = \alpha_j$ for $1\leq j \leq N$ and let $\beta_j =\overline{\mathstrut \alpha_{j-N}}$ for $N+1\leq j \leq 2N$. Note that $\overline{\mathstrut \beta_k} = \beta_{k+n \text{ (mod 2N)} }$. Elements of $F_{N}$ correspond to reduced words using elements of $g(F_{N})$ as letters. Group multiplication corresponds to concatenation and reduction of words. We first describe a specific embedding for the Cayley graph $G(F_{N})$ associated with this presentation of $F_{N}$.

The identity is represented by the empty word and corresponds to a vertex $v(e)$ at the origin.
There are $\sigma(n)=2N\cdot (N-1)^{n-1}$ reduced words of length $n$. These will be represented by $\sigma(n)$ equally spaced vertices on the circle of radius $n$ centered at the origin, 
at radial angles:\\
\centerline{
$\{
n(k)\equiv \dfrac{2\pi}{\sigma(n)} k -\dfrac{2 \pi}{2\sigma(n)} 
\ \vert\ 
1\leq k\leq \sigma(n)
\}$. 
}
Each of these points, $p(n,k)\equiv n\cdot e^{ n_k\cdot i}$ for $ 1\leq k\leq \sigma(n)$, on the circle of radius $n$  will be joined by edges to $N-1$ vertices on the circle of radius $n+1$. Specifically, 
$p(n,k)$ will be joined to the points:\\
\centerline{
$\{
p(n+1,j)
\ \vert\ 
(k-1)(N-1)< j \leq (k)(N-1)
\}.$ 
}

Figure \ref{GraphFig} show the  first two stages of the Cayley graph for $F_{2}$ and $F_{3}$. The colored directed edges correspond to generators and will be described in more detail in the next subsection.

\begin{figure}[h]
\begin{center}
  \subfigure[$F_{2}$ Stage 2]%
    {%
    \label{GraphFig22}
    \includegraphics[width=0.4\textwidth]{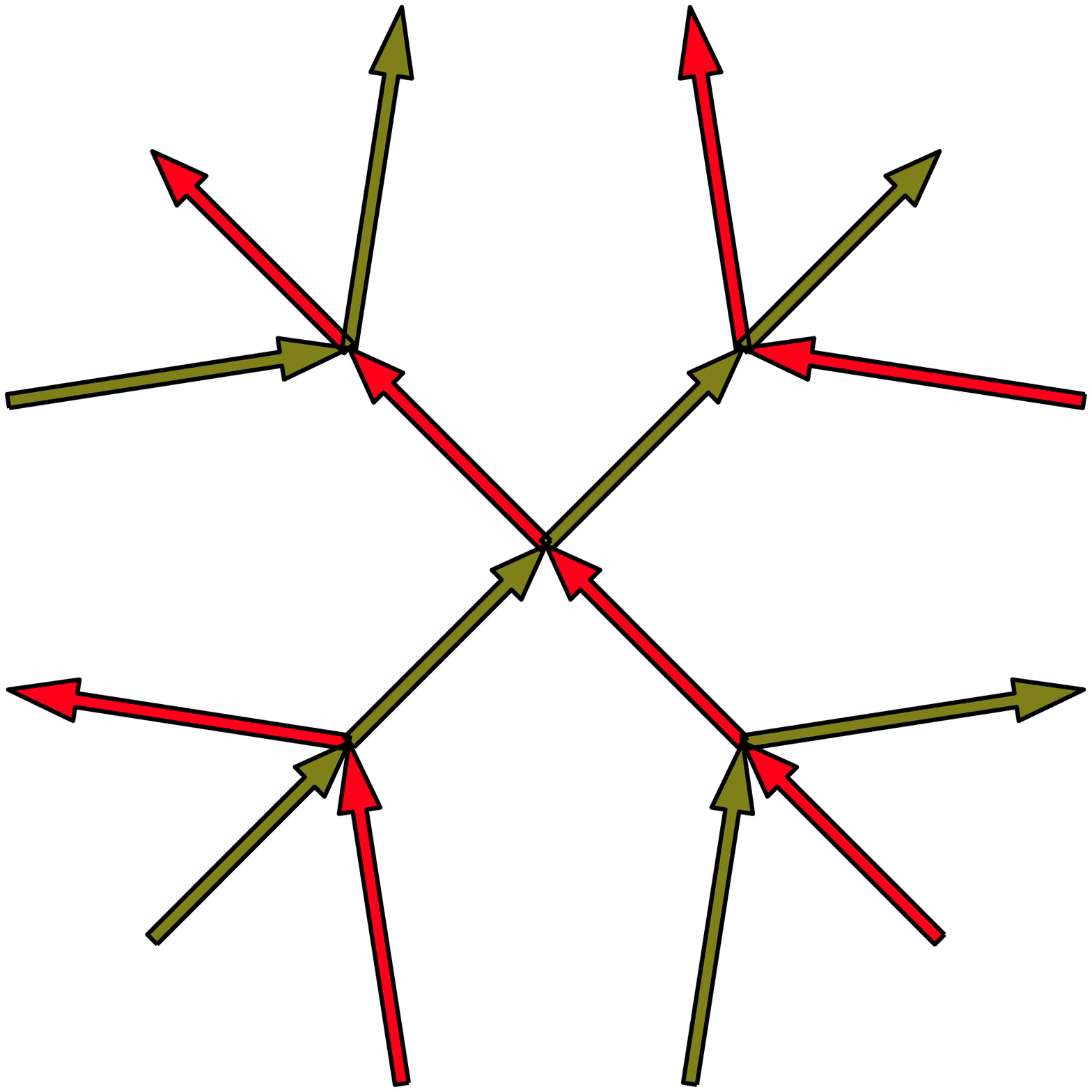}
    }%
  \subfigure[$F_{2}$ Stage 3]%
    {%
    \label{GraphFig23}
    \includegraphics[width=0.4\textwidth]{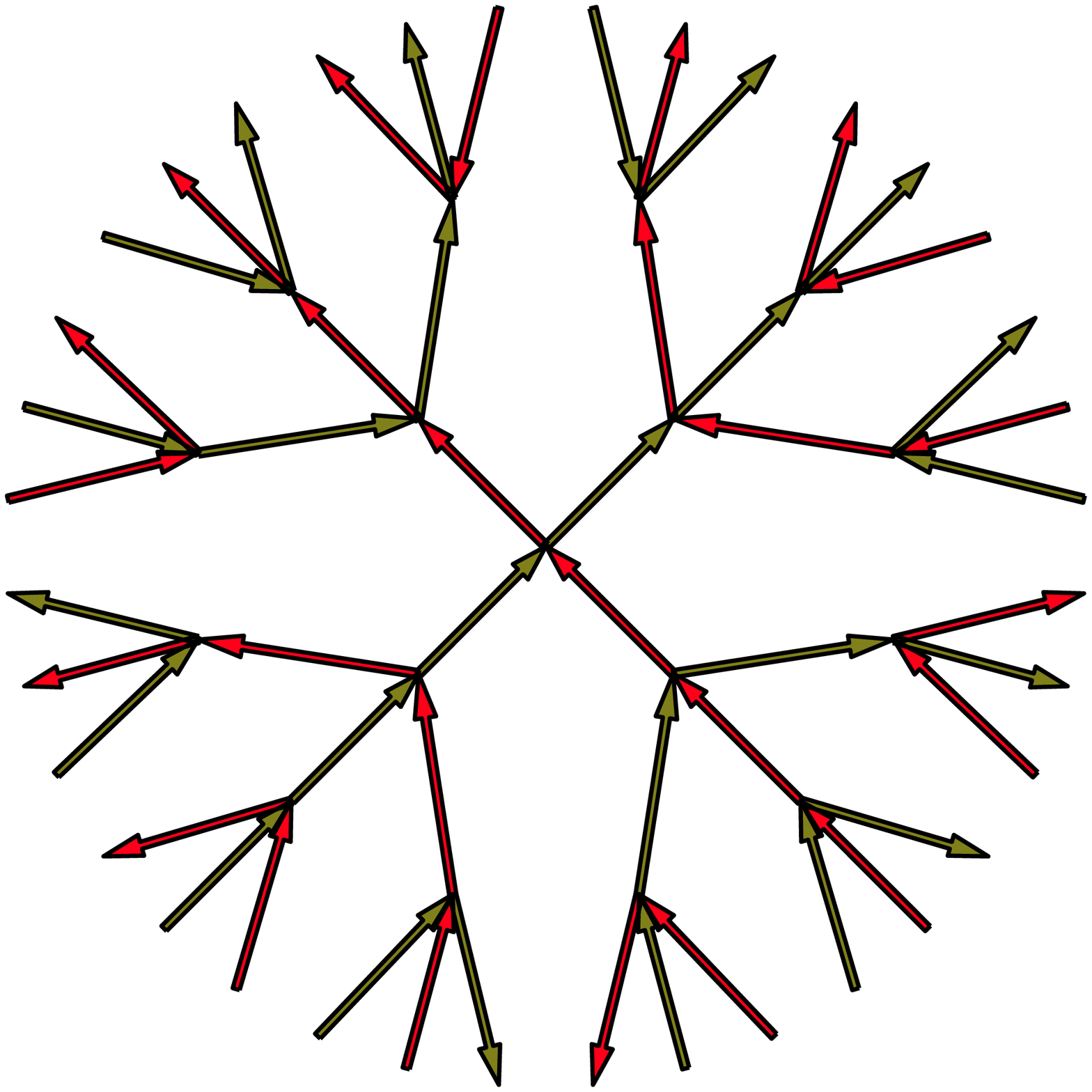}
     }\\  
  \subfigure[$F_{3}$ Stage 2]%
    {%
    \label{GraphFig32}
    \includegraphics[width=0.4\textwidth]{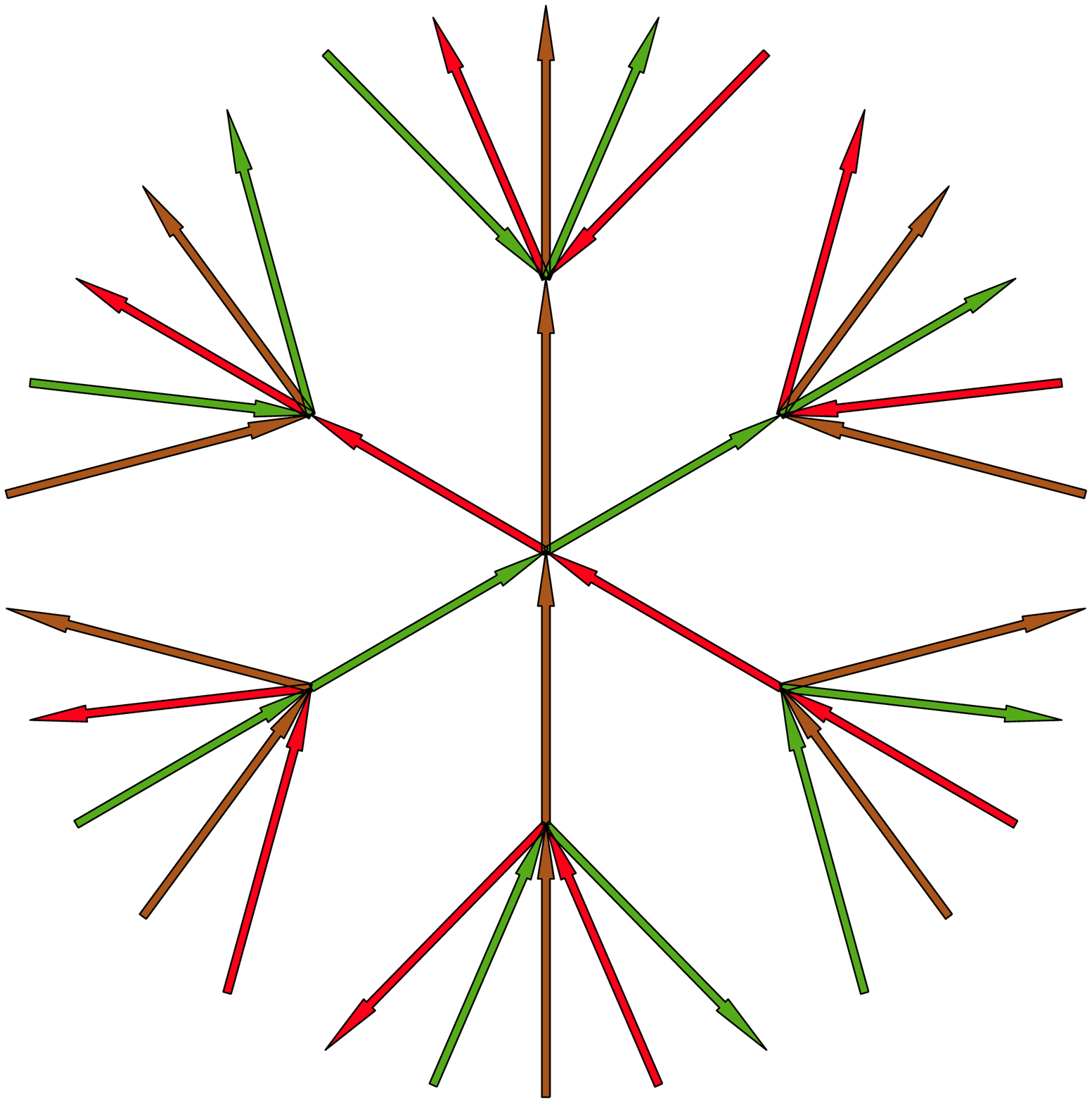}
    } %
  \subfigure[$F_{3}$ Stage 3]%
    {%
    \label{GraphFig33}
    \includegraphics[width=0.4\textwidth]{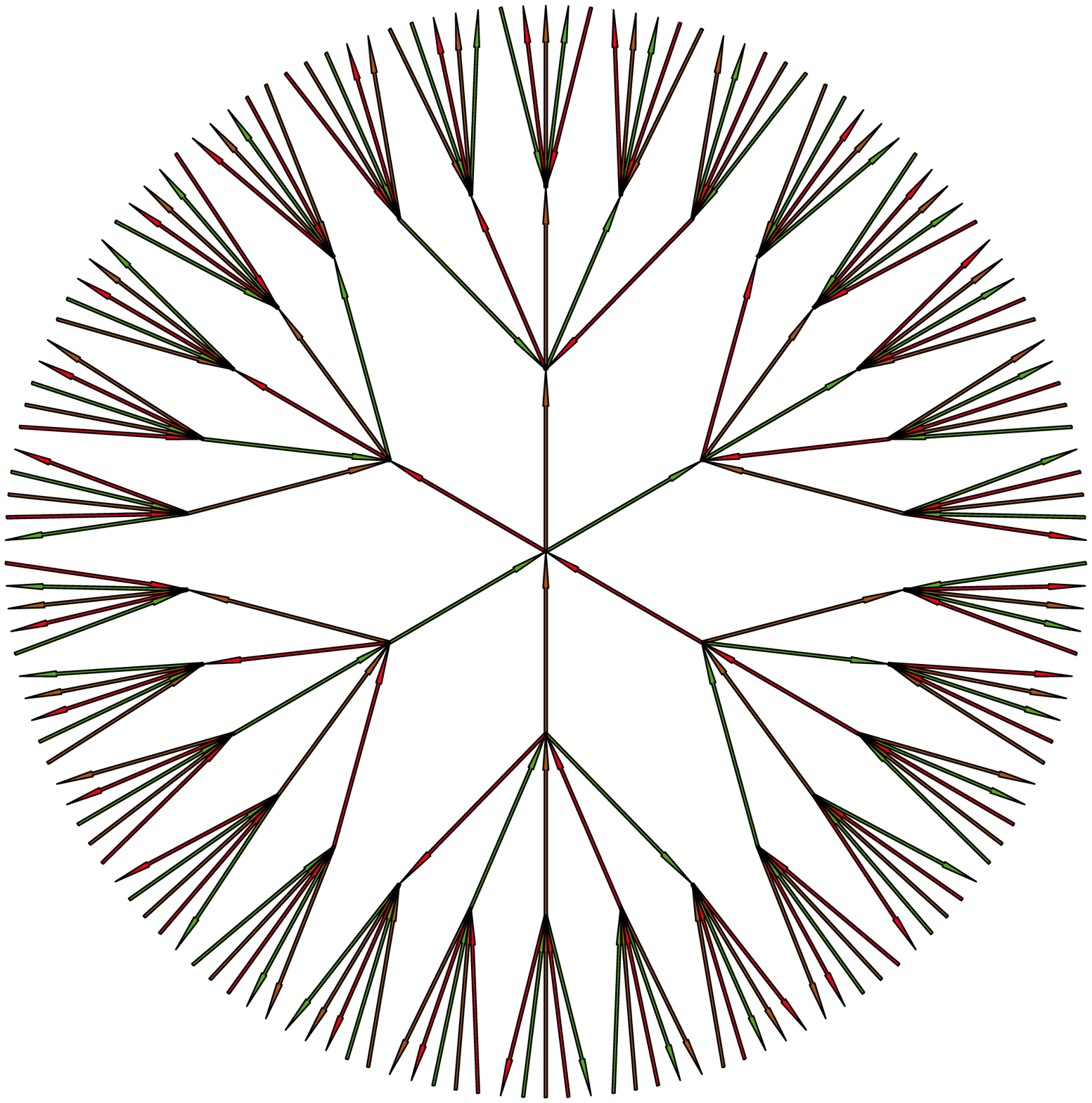}
    } 
 \end{center}
 \caption{%
    Cayley Graphs for $F_{2}$ and $F_{3}$}%
\label{GraphFig}
\end{figure}

\subsection{Labeling Edges and Vertices}

We inductively describe how to label the vertices and how to label and orient the edges of the graph described above. 
\begin{remark}
Note that an edge oriented from $v$ to $w$ labeled with the generator $a_i$ is regarded the same as that edge oriented from $w$ to $v$ and labeled with the generator $\overline{\mathstrut a_i}$.
\label{orientededges}
\end{remark}
Begin by labeling the vertex at the origin corresponding to the identity as $v(e)$. Label the vertex at $p(1,j)$ for $1\leq j \leq 2N$ as $v(\beta_j)$.

Label the edge from $v(e)$ to $v(\beta_j)$, $1\leq j \leq 2N$ with the generator $\beta_j$ and orient it from $v(e)$ to $v(\beta_j)$.
Note that this is equivalent to labeling the edge from $v(\beta_j)$ to $v(e)$  with the generator $\overline{\mathstrut \beta_j} $ and orienting it from  $v(\beta_j)$ to $v(e)$. For notation, we refer to this edge as $E(v(e), \beta_j)$ or $E(v(\beta_j),\overline{\mathstrut \beta_j})$ to emphasize the initial vertex and generator needed to get to the other vertex. To emphasize the initial and final vertices, we can use the notation $E(v(e), v(\beta_j))=E(v(\beta_j), v(e))$.

Note that starting at any point on the circle of radius $1 \slash 2$ centered at the origin and traveling in the counterclockwise direction around this circle, one encounters the labeled edges in the cyclic order  $E(v(e), \beta_1),\ldots , E(v(e), \beta_{2N})$, or equivalently, by Remark \ref{orientededges}, in the cyclic order:\\
\centerline{$E(v(e), \beta_1),\ldots , E(v(e), \beta_{N}), E(v(\beta_{N+1}), \beta_{1}),\ldots , E(v(\beta_{2N}), \beta_N)$.}

Inductively assume the vertices at distance $m$ from the origin, $m\leq n$, have been labeled as $v(g_i)$, where  $g_i$ is a reduced word of length $m$. Also assume that edges from vertices $v(g_i)$ at distance $m$ from the origin $(m\leq n-1)$ to $v(g_\ell)$ at distance $m+1$ from the origin have been labeled $E(v(g_i), \beta_k)$ where
$g_\ell = g_i\circ \beta_k$. 

Consider $v(g)$ for some $g$ of reduced length n. Then $g=g^{\,\prime}\circ \beta_k$ for some reduced word $g^{\,\prime}$ of length $n-1$ and some $\beta_k \in g(F_{N})$. The edge joining $v(g^{\,\prime})$ to $v(g)$ is labeled 
$E(v(g^{\,\prime}), \beta_k)$, or equivalently $E(v(g), \overline{\mathstrut \beta_k}) = 
E(v(g), \beta_{k+N \text{(mod 2N)}})$. Let $j=k+n \text{(mod 2N)}$.

Form a circle of radius $1 \slash 2$ about $v(g)$. Starting at the edge $E(v(g), \beta_j)$, proceed around the circle in the counterclockwise direction labeling the edges encountered as 
$$E(v(g), \beta_{j+1}), \ldots 
E(v(g), \beta_{j+2N-1}) ,$$ all subscripts mod(2N).

\begin{figure}[h]
\begin{center}
  \subfigure%
    {%
    \label{Labelling1}
    \includegraphics[width=0.47\textwidth]{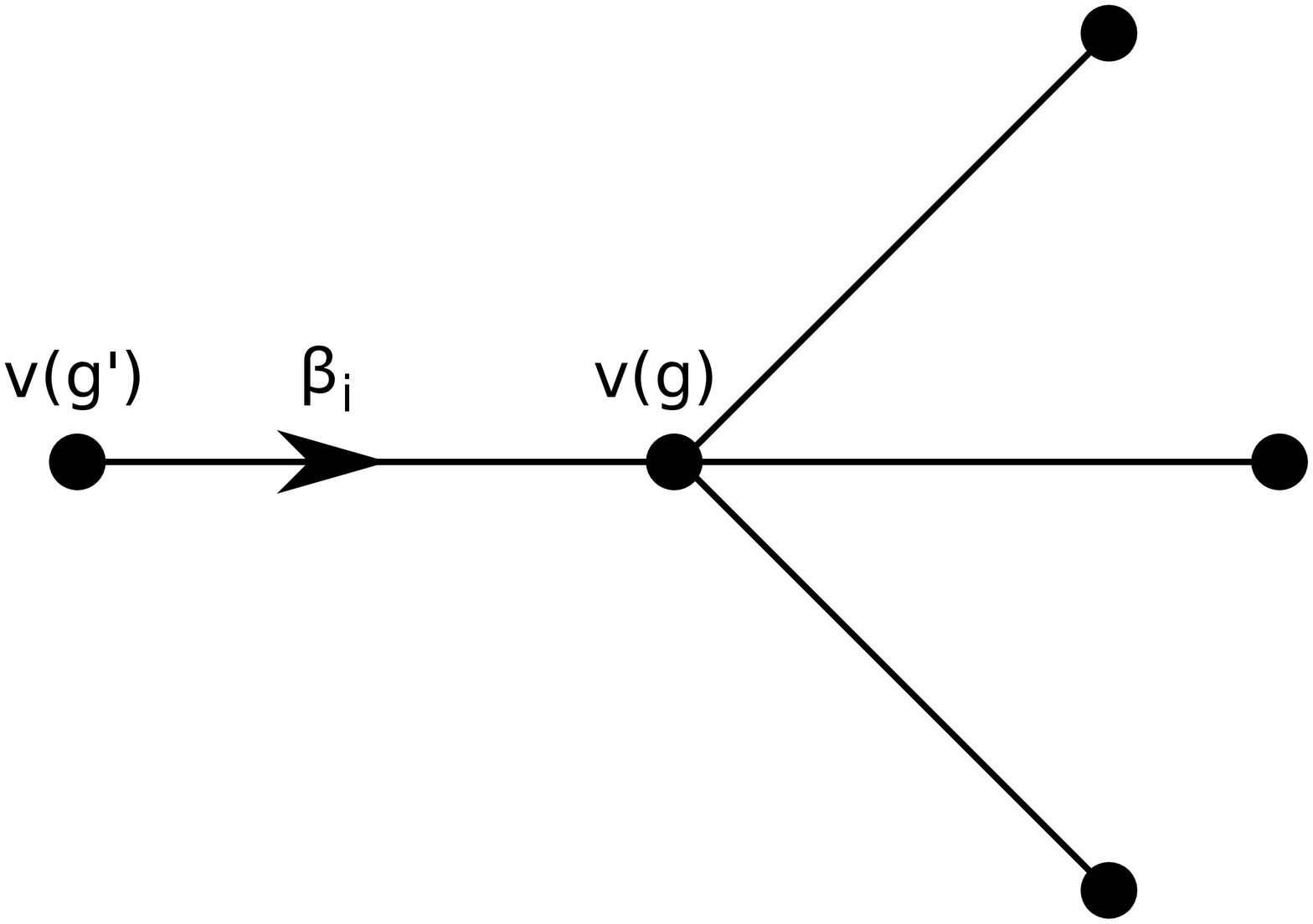}
    }%
  \subfigure%
    {%
    \label{Labelling2}
    \includegraphics[width=0.47\textwidth]{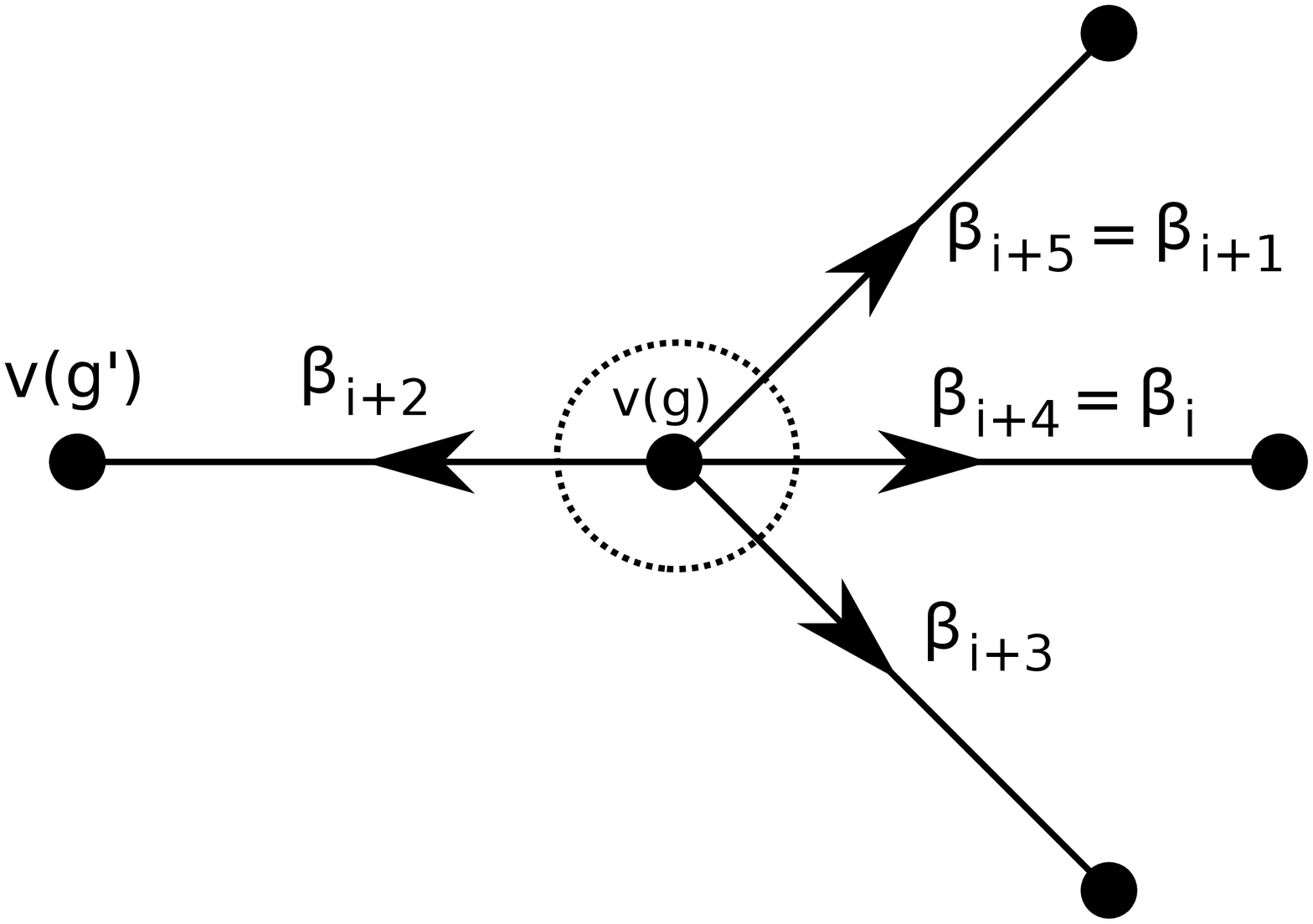}
     }  
  \end{center}
 \caption{%
    Inductively Labelling Edges}%
\label{Labelling}
\end{figure}

Label the vertices at distance $n+1$ from the origin as follows. If $v$ is such a vertex, then there is an edge
$E(v(g), \beta_{j})$ from a vertex $v(g)$ at distance $n$ from the origin to $v$. Label the vertex $v$ as $v(g\circ \beta_j)$.

The inductive assumption is easily checked for the newly labeled vertices and edges.
See Figure \ref{Labelling} for an illustration, keeping in mind Remark \ref{orientededges}.

\subsection{Cayley Graph Automorphisms for ${\pmb{G(F_{N})}}$} 

For each element $g$ of $F_{N}$ we define a graph automorphism $h_{g}$ of $G(F_{N})$ as follows. For a vertex 
$v({g_i})$ of $G(F_{N})$ corresponding to the element $g_i$ of $F_{N}$, $h_{g}(v({g_i}))$ is defined to be $v(g\circ g_i)$. The edge from $v(g_i)$ to $v(g_i\circ \beta_k)$  can be labeled $E(v(g_i), \beta_k)$ when oriented from $v(g_i)$ to $v(g_i\circ \beta_k)$. Define $h_{g}(E(v(g_i), \beta_k)$ to be the linear homeomorphism from  $ E(v(g_i), \beta_k)$ to $E(v(g\circ g_i), \beta_k)$. 

\begin{remark}\label{associativity}Note that these graph automorphisms define homeomorphisms from $G(F_{N})$ to itself. Note also that $h_{g_1}\circ h_{g_2}=h_{g_1\circ g_2}$.
\end{remark}

\subsection{Extending Graph Automorphisms to Homeomorphisms of  $\mathbb{R}^3$} 
 The  graph automorphisms $h_{\beta_i}$, for $1\leq i\leq N$, can be extended to homeomorphisms ${h^\prime}_{\beta_i}$ from $\mathbb{R}^2$ to itself. Note that $\mathbb{R}^2-G(F_{N})$ consists of a collections of unbounded regions with boundary in $G(F_{N})$. Adding in the point at infinity, each region is a closed disc with boundary consisting of edges in $G(F_{N})$ together with this added point. The graph automorphism $h_{\beta_i}$ takes the boundary of one of these disc regions to the boundary of another, and can thus be extended to the entire region.  
 There are many choices for these extension homeomorphisms. They coincide up to proper isotopy. 
 
 Piecing together these extensions, one gets an extension of $h_{\beta_i}$ to a self homeomorphism of $\mathbb{R}^2$,  ${h^\prime}_{\beta_i}$ for $1\leq i\leq N$. Define ${h^\prime}_{\beta_i}$ for $N+1\leq i\leq 2N$ to be $ (h^{\prime}_{\beta_{i-N}})^{-1}$. Since $h_{\beta_i}= ( h_{\beta_{i-N}})^{-1}$ for $N+1\leq i \leq 2N$ as graph automorphisms, it follows that $h^\prime _{\beta_i}$ is an extension of $h_{\beta_i}$ for $N+1\leq i\leq 2N$.

For a general $g\in F_{N}$, $g$ has a unique reduced representation $g=\beta_{i1}\circ \beta_{i2}\circ \ldots \circ \beta_{iM}$ where the $\beta_{ij}$ are in $g(F_{N})$. Define $h^\prime (g)$ to be: 
$$
h^\prime_g=h^\prime_{\beta_{i1}}\circ h^\prime_{\beta_{i2}}\circ \ldots \circ h^\prime_{\beta_{iM}}
$$

The homeomorphism $h^\prime_g:\mathbb{R}^2\rightarrow \mathbb{R}^2$ can then be extended to a homeomorphism 
 $H_g:\mathbb{R}^3\rightarrow \mathbb{R}^3$ by crossing with the identity.

\begin{remark}\label{associativity R3}Note  that $h^\prime_{g_1}\circ h^\prime_{g_2}=h^\prime_{g_1\circ g_2}$. This follows from the uniqueness of reduced representations of elements of $F_{N}$. The definition of $H_g$ as the product of $h^\prime_g$ with the identity shows that it is also true that  $H_{g_1}\circ H_{g_2}=H_{g_1\circ g_2}$.
\end{remark}

\section{A Cantor Set $C(F_{N})$ with Embedding Homogeneity Group $F_{N}$}
\label{Cantorsection}

\subsection{A Collection of Balls and Tubes in $\mathbb{R}^3$ Containing $G(F_{N})$} 

In $\mathbb{R}^3$, choose a 3-ball $B(v(e))$ around the origin $=v(e)$ so that the homeomorphic copies 
$H_{\beta_i}(B(v(e)))$, $1\leq i \leq 2N,$ together with $B(v(e))$ are all pairwise disjoint. Let $B(v(\beta_i))$ denote the 3-ball $H_{\beta_i}(B(v(e)))$.
The original ball $B(v(e))$ can be chosen so that it is contained in a ball of radius $1\slash 5$ about $v(e)$, and so that each $B(v(\beta_i))$ is contained in a ball of radius $1\slash 5$ about $v(\beta_i)$.

For a general $g\in F_{N}$, use the unique reduced representation of  $g$ as $\beta_{i1}\circ \beta_{i2}\circ \ldots \circ \beta_{iM}$ where the $\beta_{ij}$ are in $g(F_{N})$. The ball $B(v(g))$ is then defined to be to be:
$$
H_{\beta_{i1}}\circ H_{\beta_{i2}}\circ \ldots \circ H_{\beta_{iM}}(B(v(e)))=H_{g}(B(v(e))).
$$ 
We similarly define a collection of tubes. For each $i$, $1\leq i \leq N$ choose a thin tube $D(v(e),\beta_i)$ joining $B(v(e))$ to $B(v(\beta_i))$ so that 
$B(v(e))\cup D(v(e),\beta_i) \cup B(v(\beta_i))$ 
contains $E(v(e), \beta_i)$. Choose the tubes $D(v(e),\beta_1), \ldots , D(v(e),\beta_N)$, so as to be pairwise disjoint. For each $i$, $N+1\leq i \leq 2N$ let $D(v(e),\beta_i)$ be 
$H_{\beta_i}(D(v(e),\beta_{i-N}))=H_{\beta_i}(D(v(e),\overline{\beta_{i}}))$.
See Figure \ref{BallsTubes} for an illustration for $F_{2}$.

\begin{remark} Note that the relationship $D(v(e),\beta_i) = H_{\beta_i}(D(v(e),\overline{\beta_{i}}))$ then holds for all $i$, $1\leq i \leq 2N$.
\end{remark}

\begin{figure}[h]
\begin{center}
    \includegraphics[width=0.45\textwidth]{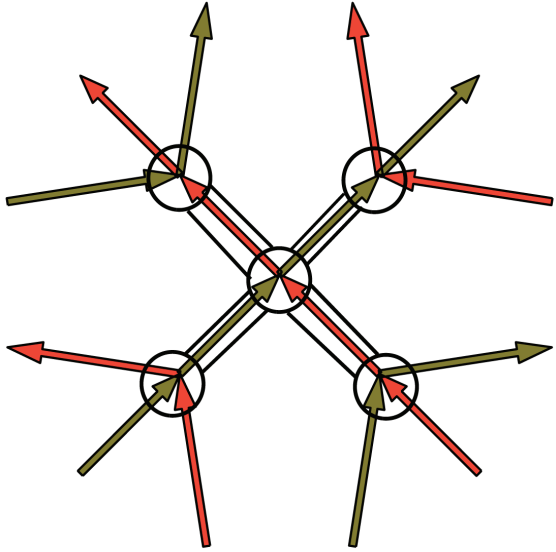}
\end{center}
 \caption{%
    Balls, Tubes around $G(F_{2})$ }%
\label{BallsTubes}
\end{figure}


\subsection{Rigid Cantor Sets Using $G(F_{N})$ as a Guide.} 


\begin{figure}[h]
\begin{center}
    \includegraphics[width=0.47\textwidth]{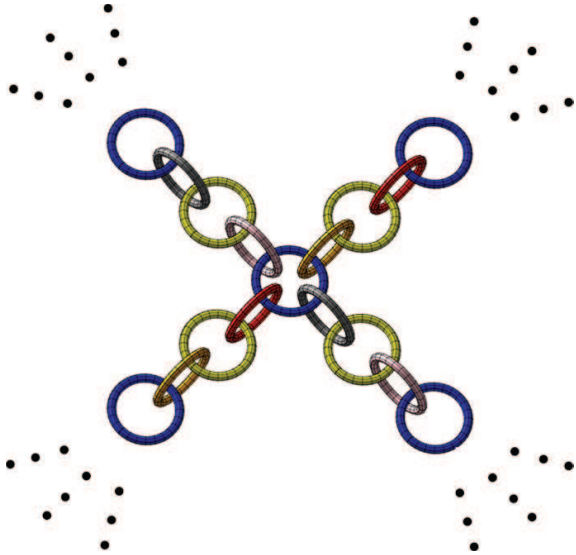}
\end{center}
 \caption{%
    Construction of Rigid Cantor Sets Placement for $G(F_{2}$)}%
\label{Rigid}
\end{figure}

We will now define  rigid Antoine Cantor sets  associated with each vertex in $G(F_N)$, and a chain of three linked rigid Cantor sets for each edge in $G(F_n)$. See Figure \ref{Rigid} for an illustration of the outer tori in defining sequences for these rigid Cantor sets in the case of $F_2$.

To begin, construct a rigid Antoine Cantor set $C(v(e))$  so the outer torus, $T(C(v(e)))$,  of the construction is  in $B(v(e))$. This $T(C(v(e)))$ corresponds to the central torus in Figure \ref{Rigid} in the case of $F_{2}$. Let $T^{\prime}(C(v(e)))$ be the union of the second stage tori in the construction of $C(v(e))$. See Figure \ref{Antoine} for an illustration of the first and second stages of an Antoine construction.

For a general $g\in F_{N}$, use the unique reduced representation of  $g$ as $\beta_{i1}\circ \beta_{i2}\circ \ldots \circ \beta_{iM}$ where the $\beta_{ij}$ are in $g(F_{N})$. Define a rigid Cantor set $C(v(g))\subset B(v(g))$  to be:
$$
H_{\beta_{i1}}\circ H_{\beta_{i2}}\circ \ldots \circ H_{\beta_{iM}}(C(v(e)))=H_{g}(C(v(e))).
$$ 

\begin{remark}
Note that $C(v(e))$ and any $C(v(g))$ are equivalently embedded.
\end{remark}

Similarly, let 
$T(C(v(g)))=H_{g}(T(C(v(e)))) \text{ and } T^{\,\prime}(C(v(g)))=H_{g}(T^{\prime}(C(v(e)))).
$
The four outer tori in Figure \ref{Rigid} correspond to $T(C(a_1)), T(C(a_2)), T(C(\overline{\mathstrut a_1}))$,
and 
$TC(\overline{\mathstrut a_2})$ in the case of $F_{2}$.

We next construct rigid Cantor sets  
$C_{1}(v(e),\beta_{k}), C_{2}(v(e),\beta_{k}), C_{3}(v(e),\beta_{k})$ for each $k, 1\leq k\leq N$. 
Choose a series of three linked tori, 
$T_{1}(v(e),\beta_{k}), T_{2}(v(e),\beta_{k}),$ and $T_{3}(v(e),\beta_{k})$ 
joining  
$T(C(v(e))) \text{ and } T(C(\beta_k)), 1\leq k\leq N$ 
and contained in 
$B(v(e))\cup D(v(e),\beta_k) \cup B(v(\beta_k))$ 
as in Figure \ref{Rigid}. 
Label the torus linked to $T(C(v(e)))$ as $T_{1}(v(e),\beta_{k})$,  and use it as the outer torus for a rigid Antoine Cantor set $C_{1}(v(e),\beta_{k})$. 
Label the torus linked to  $T(C(\beta_k))$ as $T_{3}(v(e),\beta_{k})$, and use it as the outer torus for a rigid Antoine Cantor set $C_{3}(v(e), \beta_{k})$. 
Label  the remaining torus as $T_{2}(v(e),\beta_{k})$ and use it as the outer torus for a rigid Antoine Cantor set $C_{2}(v(e), \beta_{k})$. 
\begin{remark}
\label{inequivalent}
Choose these $3N$ rigid Cantor sets $\{T_{i}(v(e),\beta_{k}), 1\leq 1\leq 3, 1\leq k \leq N\}$ so that they are all inequivalently embedded, and so that they are all inequivalent to $C(v(e))$.
\end{remark}

For $N+1\leq k \leq 2N$, and $1\leq j \leq 3$, 
let $C_{j}(v(e), \beta_{k})=H_{\beta_k}(C_{j}(v(e), \overline{\beta_{k}}))$, 
and 
$T(C_{j}(v(e), \beta_{k}))=H_{\beta_k}(T(C_{j}(v(e), \overline{\beta_{k}})))$.
This completes the definition of the $3\cdot2N$ rigid Cantor sets  along the edges $E(v(e),\beta_{k})$. See Figure \ref{Rigid} for outer tori corresponding to these Cantor sets in the case $F(2)$.

For $g\in F_{N}$, let $C_{i}(v(g),\beta_k)=H_{g}(C_{i}(v(e), \beta_k))$ 
and  $T(C_{i}(v(g),\beta_k))=H_{g}(T(C_{i}(v(e), \beta_k)))$
for $1\leq i \leq 3$ and  $1\leq k \leq 2N$.

For any  rigid Antoine Cantor set $C$ in this construction, $T(C)$ represents the outer torus in the Antoine Construction and $T^{\prime}(C)$ represents the union of the second stage tori in the construction.

\begin{remark}
Because $E(v(g),\beta_k) \text{ and } E(v(g\circ \beta_k), \overline{\beta_k})$ are identified, it appears that there are two possible definitions for the $C_{i}$ on this edge: $H_{g}(C_{i}(v(e), \beta_k))$ and 
$H_{g\circ \beta_k}(C_{i}(v(e),\overline{\beta_k}))$. 
However, 
$
H_{g\circ \beta_k}(C_{i}(v(e),\overline{\beta_k}))
=H_{g}(
H_{\beta_{k}}(
 H_{\,\overline{\beta_k}}(C_{i}(v(e), \beta_{k_{i}}))))=H_{g}(C_{i}(v(e), \beta_k)).
$
\end{remark}

\begin{remark}
\label{inequivalent2}
Note
that none of the $C_{i}(v(g),\beta_{k})$ are equivalent to any $C(v(g))$, and that  
$C_{i}(v(g),\beta_{k})$ is equivalent to $C_{j}(v(g^{\,\prime}),\beta_{\ell})$ 
if and only if one of the following two conditions holds:

\centerline{$ i=j \text{ and } \beta_{k}=\beta_{\ell}\text{, or }
i=j \text{ and } \beta_{k}=\overline{\beta_{\ell}}.$
}
\end{remark}
The union of the Cantor sets $C_{i}(g,\beta_k)$ together with the union of the Cantor sets $C(v(g))$ is not a Cantor set since it is not compact. To remedy this, we add the point at infinity, denoted by $p$, to $\mathbb{R}^3$ to get $S^3$. We then define the Cantor set $\mathcal{C}$ associated with $G(F_{N})$ to be:
$
\mathcal{C}=C(G(F_{N}))= \{C_{i}(v(g),\beta_k), g\in F_{N}, 1\leq k \leq 2N\} \cup \{C(v(g)), g\in F_{N}\}\cup\{p\}.
$

\section{Main Theorems}
\label{Theoremsection}
We now have $\mathcal{C}\subset S^3$. One can show that $\mathcal{C}$  cannot be separated by any 2-sphere, by an argument similar to that referred to in Remark \ref{separation}. Let $\mathcal{M}=M(F_{N})$ be the 3-manifold $S^3- \mathcal{C}$. By Remark \ref{irreducible}, $\mathcal{M}$ is irreducible.

We will  show that the embedding homogeneity group of $\mathcal{C}\subset S^3$ is $F_{N}$, and correspondingly, that the end homogeneity group of $\mathcal{M}$ is $F_{N}$.

We  use the following three technical lemmas in the proof of Theorem \ref{MT 2} below. In reading the first lemma, think of $T_1$ and $T_2$ as two linked tori in a stage of an Antoine construction as in Figure \ref{Antoine}. The second lemma shows that homeomorphisms of $S^{3}$ taking $\mathcal{C}$ to itself necessarily fix the point $p$. The third lemma shows that homeomorphisms of $S^{3}$ taking some $C(v(g))$ to some $C(v(g^{\,\prime}))$, when restricted to $C(v(g))$, agree with some
$H_{g_{1}}$ restricted to $C(v(g))$.

\begin{lem}
\label{TechLemma}
Let $C_1$, $C_2$ and $C_3$  be  rigid Antoine Cantor sets in $\mathbb{R}^3$. 
Let $T_i$ and $T^{\prime}_i$ be the first and second stages in the construction of $C_i$. 
Assume that $T_1$ and $T_2$ are linked (and thus are  disjoint). 
Let $ h:\mathbb{R}^3\rightarrow \mathbb{R}^3$ be a homeomorphism such that 
\begin{itemize}
\item[\textbf{**}]$h(C_1) \cap C_3 \neq \emptyset$.
\end{itemize}
Assume in addition that
\begin{itemize}
\item[\textbf{a)}] 
$h\left (\frac{\mathstrut}{\mathstrut} \partial(T_1\cup T_2) \cup \partial(T^{\prime}_1\cup T^{\prime}_2)\right )$  
$\cap$ 
 $\left (\frac{\mathstrut}{\mathstrut}\partial(T_3)\cup \partial(T^{\prime}_3)\right ) = \emptyset$, 
\item [\textbf{b)}]
$h(C_1)\cap T_3\subset C_3$, and %
\item[\textbf{c)}] 
$h(C_2)\cap T_3\subset C_3$.
\end{itemize}
If $h(T_1)\subset T_3$, then 
\begin{itemize}
\item[\textbf{I)}]either 
the geometric index of  $h(T_1)$ in $T_3$ is 0,  $h(T_2)$ is also contained in $T_3$, and the geometric index of $h(T_2)$ in $T_3$ is also 0, 
\item[\textbf{II)}] or the geometric index of $h(T_1)$ in $T_3$ is $1$ and $C_3\subset h(T_1)$. 
\end{itemize}
\end{lem}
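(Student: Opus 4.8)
The plan is to analyze the single quantity $n=\mathrm{N}(h(T_1),T_3)$ and to establish a \emph{capture principle}: if the image of the outer torus of a rigid Antoine Cantor set lies inside $T_3$ with geometric index $\geq 1$ there, then it must swallow all of $C_3$. Two facts will be used throughout without further comment: geometric index is invariant under homeomorphisms of $S^3$ (meridional discs go to meridional discs), so $\mathrm{N}(h(T^{\prime}_1),h(T_1))=\mathrm{N}(T^{\prime}_1,T_1)=2$ and each component of $h(T^{\prime}_1)$ has index $0$ in $h(T_1)$; and a solid torus of geometric index $0$ in a solid torus $T$ has null-homologous core in $T$ and lies in a ball in $T$. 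I first record the elementary consequences of the hypotheses: since $h(T_1)\subset T_3$, condition \textbf{b)} gives $h(C_1)=h(C_1)\cap T_3\subset C_3$; likewise, once $h(T_2)\subset T_3$ is known, \textbf{c)} will give $h(C_2)\subset C_3$.

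To prove the capture principle for $T_1$, write the second stage of $C_3$ as the necklace $T^{\prime}_3=\bigcup_i T^{\prime}_{3,i}$ of at least four linked tori, which has index $2$ in $T_3$ while each single component has index $0$ in $T_3$. By condition \textbf{a)} the surface $\partial h(T_1)$ is disjoint from $\partial T^{\prime}_3$, so each $T^{\prime}_{3,i}$ is either inside $h(T_1)$ or disjoint from it; the remaining possibility $h(T_1)\subset T^{\prime}_{3,i}$ is excluded when $n\geq1$ because Theorem \ref{productindex} would then force $n=0$. The point supplied by \textbf{**} lies in $h(C_1)\subset h(T_1)$ and in $C_3\subset T^{\prime}_3$, so at least one component $T^{\prime}_{3,i_0}$ is captured. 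Assume $n\geq1$. If some component were uncaptured, then, the necklace being a cycle, there is an adjacent captured/uncaptured pair $A,B$; by Theorem \ref{productindex}, $\mathrm{N}(A,h(T_1))\cdot n=\mathrm{N}(A,T_3)=0$, so $A$ has index $0$ in $h(T_1)$ and its core bounds a surface in $h(T_1)$ missing the uncaptured $B$, giving linking number $0$ and contradicting the nonzero linking of adjacent Antoine tori. Hence every component is captured, $C_3\subset T^{\prime}_3\subset h(T_1)$, and Theorems \ref{productindex} and \ref{evenindex} give $\mathrm{N}(T^{\prime}_3,h(T_1))\cdot n=\mathrm{N}(T^{\prime}_3,T_3)=2$ with $\mathrm{N}(T^{\prime}_3,h(T_1))$ even and positive, forcing $n=1$. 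This is conclusion \textbf{II)}.

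It remains to treat $n=0$, which already gives the first assertion of \textbf{I)} and places $h(T_1)$ in a ball of $T_3$. Since $h(T_1)$ and $h(T_2)$ are linked, hence disjoint with nonzero (and $h$-invariant) linking number, and condition \textbf{a)} keeps $\partial h(T_2)$ off $\partial T_3$, the torus $h(T_2)$ lies wholly inside or wholly outside $T_3$; the outside case is impossible, for then the core of $h(T_1)$ would bound a surface inside its ball in $T_3$, missing $h(T_2)$, giving linking number $0$. Thus $h(T_2)\subset T_3$, and \textbf{c)} yields $h(C_2)\subset C_3$, so $h(T_2)$ also captures a component of the necklace. Applying the capture principle to $h(T_2)$: if $\mathrm{N}(h(T_2),T_3)\geq1$ we would obtain $C_3\subset h(T_2)$, yet the point from \textbf{**} lies in $h(C_1)\subset h(T_1)$, which is disjoint from $h(T_2)$ — a contradiction. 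Hence $\mathrm{N}(h(T_2),T_3)=0$, completing \textbf{I)}.

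The step I expect to be the main obstacle is controlling the crude ``inside or outside'' dichotomy, which condition \textbf{a)} guarantees only at the level of boundaries: one must rule out a necklace component engulfing $h(T_1)$ (handled above through index multiplicativity under $n\geq1$) and be certain no degenerate nesting of the necklace in $h(T_1)$, or of $h(T_2)$ in $T_3$, slips through. The second delicate point is the linking-number argument, which requires that adjacent tori in an Antoine necklace, and the given pair $T_1,T_2$, have \emph{nonzero} linking number rather than merely being inseparable, and that this survives under $h$; this is precisely where the specific Hopf-type linking of the Antoine construction, and not abstract inseparability, is used.
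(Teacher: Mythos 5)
Your proof is correct and follows essentially the same route as the paper's: a trichotomy on the geometric index $\mathrm{N}(h(T_1),T_3)$, condition \textbf{a)} giving a clean containment dichotomy for the components of $T^{\prime}_3$, propagation of capture around the necklace via the linking of adjacent components, and the computation $\mathrm{N}(T^{\prime}_3,h(T_1))\cdot n=\mathrm{N}(T^{\prime}_3,T_3)=2$ with the first factor even and positive to force $n=1$ and exclude $n>1$. The only differences are cosmetic: you merge the index~$1$ and index~$>1$ cases into one ``capture principle'' and justify the propagation step with linking numbers, where the paper argues that any contraction of a captured component must meet its linked neighbor; both rest on the same Hopf-type linking of adjacent Antoine tori.
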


\begin{proof}
We have $h(T_1)\subset T_3$ and we consider the geometric index of $h(T_1)$ in $T_3$.

\textbf{Index 0: }Assume that the geometric index is $0$. Then $h(T_1)$ is contained in a cell $B$ in $T_3$, and so contracts in $T_3$.

If $\partial(h(T_2))$ does not intersect $\partial(B)$, then either $h(T_2)\subset B$, $B\subset h(T_2)$ or $h(T_2)$ misses $B$. The last case cannot occur because of the linking of $h(T_2)$ and $h(T_1)$. In the first case, $h(T_2)\subset T_3$ and the geometric index of  $h(T_2)$ in $T_3$ is $0$. So condition \textbf{(I)} holds. The second case cannot occur because $h(T_1)$ and $h(T_2)$ are disjoint.

If $\partial(h(T_2))$ does  intersect $\partial(B)$, then $h(T_2)\subset T_3$ by \textbf{(a)} above. If the geometric index of $h(T_2)$ in $T_3$ is $\geq 1$, then $h(T_2)$ cannot be contained in any component of $T^{\prime}_3$ by Theorem \ref{productindex}. Thus, by \textbf{(a)}, $h(T_2)$ contains any component of $T^{\prime}_3$ that it intersects. By \textbf{(c)}, $h(C_2)\cap C_{3}\neq \emptyset$. So $h(T_2)$ intersects and thus contains some component $D_1$ of $T^{\prime}_3$. The geometric index of $D_1$ in $h(T_2)$ must be $0$ by Theorem \ref{productindex}. Thus $D_1$ is contained in a cell in $h(T_2)$ and contracts in $h(T_2)$.

Let $D_1, D_2,\ldots D_m$ be the components of $T^{\prime}_3$, listed so that $D_i$ links $D_{i+1}$. Since $D_1$ and $D_2$ are linked, the Antoine construction guarantees that any contraction of $D_1$ intersects $C_3\cap D_2$. So $h(T_2)$ intersects and thus contains $D_2$ and $D_2$ has geometric index 0 in $h(T_2)$. Continuing inductively, all of the $D_i$ are contained in $h(T(2))$ and thus $h(T_2)$ contains $C_3$.

Since $h(T_1)$ is disjoint from $h(T_2)$, this contradicts condition \textbf{**}.

\textbf{Index greater than 1:}
If the geometric index of $h(T_1)$ in $T_3$ is greater than $ 1$, then by Theorem \ref{productindex}, $h(T_1)$ cannot be contained in any component of $T^{\prime}_3$.  So $h(T_1)$ contains each component of $T^{\prime}_3$ that it intersects. Let $D_1, D_2,\ldots D_m$ be the components of $T^{\prime}_3$, listed so that $D_i$ links $D_{i+1}$. By condition \textbf{**}, $h(T_1)$ intersects and thus contains some component of $T^{\,\prime}_{3}$, say $D_1$.
The geometric index of $D_1$ in  $h(T_1)$ is 0 by Theorem \ref{productindex}. 

By an argument similar to that in the \textbf{Index 0} case above, $h(T_1)$ contains each $D_i$, and each $D_i$ is of index 0 in $h(T_1)$.
By Theorem \ref{evenindex}, the geometric index of $T^{\prime}_3$ in  $h(T_1)$ is even. Since the geometric index of  $T^{\prime}_3$ in  $T_3$ is two \cite{AGRW17}, the geometric index of $T^{\prime}_3$ in  $h(T_1)$ cannot be $0$ and so is at least two.
Theorem \ref{productindex} now implies that the geometric index of  $T^{\prime}_3$ in  $T_3$ is at least 4, which is a contradiction. So the the geometric index of $h(T_1)$ in $T_3$ cannot be greater than $ 1$.

\textbf{Index equal to 1:}
If the geometric index of $h(T_1)$ in $T_3$ is $1$, then by Theorem \ref{productindex}, $h(T_1)$ cannot be contained in any component of $T^{\prime}_3$.  An argument similar to that in the previous two cases shows that $h(T_1)$ contains each component of $T^{\prime}_3$.
Thus $C_3\subset h(T_1)$. So condition \textbf{(II)} holds. 
\end{proof}
\begin{lem}
\label{GenusLemma}
Let $\Gamma:\mathcal{C}\rightarrow \mathcal{C}$ be a homeomorphism that extends to a homeomorphism of $S^3$. Then $\Gamma(p)=p$.
\end{lem}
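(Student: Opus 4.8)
The plan is to prove that $p$ is the unique point of $\mathcal{C}$ at which the local genus is infinite (or even merely greater than $1$), and then to invoke the topological invariance of local genus. Since $\Gamma$ extends to a homeomorphism $\widehat\Gamma$ of $S^3$ with $\widehat\Gamma(\mathcal{C})=\mathcal{C}$, Remark~\ref{genus} gives $g_{\Gamma(p)}(\mathcal{C})=g_{\widehat\Gamma(p)}(\widehat\Gamma(\mathcal{C}))=g_p(\mathcal{C})$. Hence, once $p$ is singled out by its local genus, it follows that $\Gamma(p)=p$.

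First I would treat the points $x\in\mathcal{C}$ with $x\neq p$ and show $g_x(\mathcal{C})=1$. Each such $x$ lies in exactly one of the rigid Antoine Cantor sets $C(v(g))$ or $C_i(v(g),\beta_k)$, say $A$, and since these Cantor sets lie in pairwise disjoint solid tori that accumulate only at the point at infinity $p$, the set $A$ can be isolated by a single solid torus in the zeroth stage of a defining sequence for $\mathcal{C}$. Refining inside that torus by the genus-$1$ Antoine tori of $A$ keeps the $x$-component of genus $1$ at every stage, so $g_x(\mathcal{C})\leq 1$; and $A\subset\mathcal{C}$ together with Remark~\ref{genus} gives $g_x(\mathcal{C})\geq g_x(A)=1$. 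Thus the local genus is finite, equal to $1$, at every point other than $p$.

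The heart of the matter is to show $g_p(\mathcal{C})=\infty$, which I would build from two ingredients. The key sub-claim is that if $Z$ is the union of $\{p\}$ with an infinite chain $A^{(1)},A^{(2)},\dots$ of Antoine Cantor sets in which $A^{(j)}$ is linked to $A^{(j+1)}$ and $A^{(j)}\to p$, then $g_p(Z)\geq 1$. Indeed, such a chain is unsplittable: no $2$-sphere separates its points, by the argument of Remark~\ref{separation} (a $2$-sphere cannot separate two linked Antoine Cantor sets, and linking propagates along the chain). If we had $g_p(Z)=0$, a defining sequence of $Z$ would have its $p$-component a $3$-ball $Q$ at every stage; taking $Q$ small we obtain a sphere $\partial Q$, disjoint from $Z$, with points of $Z$ near $p$ inside and the far set $A^{(1)}$ outside, contradicting unsplittability. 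For the second ingredient, since $G(F_N)$ is a tree with branching (here $N\geq 2$), it contains, for every $m$, a family of $m$ pairwise disjoint such chains $Z_1,\dots,Z_m$ all accumulating at $p$; using the explicit radial placement of Section~\ref{Cayleysection} these can be separated near $p$, one from the rest, by discs through $p$ that meet $\mathcal{C}$ only at $p$. Repeated application of Theorem~\ref{Slicing} then gives $g_p\!\left(\bigcup_{j=1}^{m}Z_j\right)=\sum_{j=1}^{m}g_p(Z_j)\geq m$, and since $\bigcup_j Z_j\subset\mathcal{C}$, monotonicity (Remark~\ref{genus}) yields $g_p(\mathcal{C})\geq m$. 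As $m$ is arbitrary, $g_p(\mathcal{C})=\infty$, so combining with the previous step, $p$ is the unique point of $\mathcal{C}$ of infinite local genus and therefore $\Gamma(p)=p$.

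I expect the main obstacle to be the sub-claim that a linked chain accumulating at $p$ has positive local genus there. Its content is exactly that the linking of consecutive Antoine Cantor sets forbids the $p$-component of any defining sequence from being ball-like at fine stages; the mechanism is the non-separability of linked Antoine Cantor sets by a $2$-sphere, the same phenomenon underlying the unsplittability of $\mathcal{C}$ asserted at the start of this section. A secondary technical point is the threading of the slicing discs so that each sector still captures a complete linked chain while the discs avoid $\mathcal{C}$ away from $p$; this is where the explicit geometry of the Cayley graph embedding is used. I note that showing merely $g_p(\mathcal{C})\geq 2$, via a single application of Theorem~\ref{Slicing} to two disjoint chains, would already distinguish $p$ from every other point and suffice to conclude $\Gamma(p)=p$.
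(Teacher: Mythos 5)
Your proposal is correct and follows essentially the same route as the paper: invariance of local genus under ambient homeomorphisms, local genus $\leq 1$ at every point other than $p$, and local genus $\geq 2$ at $p$ obtained by applying Theorem~\ref{Slicing} to disjoint linked Antoine chains accumulating at $p$, each of positive local genus there because a small sphere about $p$ would otherwise split linked stages. The paper uses exactly two such subsets ($\mathcal{C}_1$ and $\mathcal{C}_2$, split by the first letter of the reduced word) and remarks, as you do, that the genus at $p$ is actually infinite.
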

\begin{proof}
The local genus of any point in $\mathcal{C}$ other than $p$ is less than or equal to one because these points are defined by nested sequences of tori which are of genus one. If we can show the local genus of $p$ is at least 2, then the results mentioned in Remark \ref{genus}  show that 
$\Gamma (p)=p$. 
\begin{remark}\label{localgenus}
The local genus of $p$ in $\mathcal{C}$ cannot be $0$ because otherwise, there would be arbitrarily small 3-balls containing $p$ with boundary missing 
$\mathcal{C}$. These boundary 2-spheres  in the complement of 
$\mathcal{C}$ would separate linked stages of some Antoine construction. This cannot happen.
\end{remark}

\begin{figure}[h]
\begin{center}
    \includegraphics[width=0.47\textwidth]{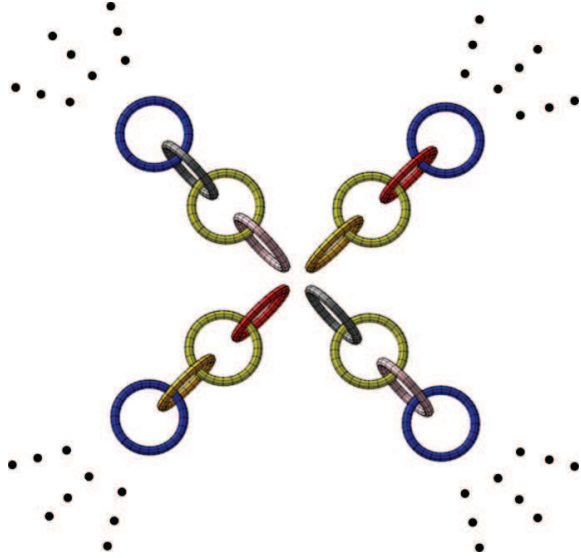}
\end{center}
 \caption{%
    Start of construction for $\mathcal{C}_{1}$ and $\mathcal{C}_{2}$ in the case F(2)}%
\label{rigid2}
\end{figure}

To see that the local genus of $p$ in $\mathcal{C}$ is at least 2, consider 2 subsets of $\mathcal{C}$ that meet in $p$. These subsets are obtained by   dividing
$\mathcal{C}-C(v(e))$ into two subsets that meet in $p$.

 For a nonidentity $g\in F_{N}$, with  unique reduced representation  as $\beta_{i1}\circ \beta_{i2}\circ \ldots \circ \beta_{iM}$ where the $\beta_{ij}$ are in $g(F_{N})$, let $g_1=\beta_{i1}$.

$$ \text{Let }\mathcal{C}_{1}=
 \bigcup_{k=1}^N \bigg ( \{C(v(g)),  g_{1}=\beta_{k}  \}
\cup \bigcup_{i=1}^{3}
\{C_{i}(v(g),\beta_j), g_{1}=\beta_{k} \}\bigg )
\cup\{p\}, 
\text{\ and  } $$

$$ \mathcal{C}_{2}=
\displaystyle \bigcup_{k=N+1}^{2N} \bigg ( \{C(v(g)),  g_{1}=\beta_{k}  \}
\cup \bigcup_{i=1}^{3}
\{C_{i}(v(g),\beta_j), g_{1}=\beta_{k} \}\bigg )
\cup\{p\}.$$

Figure \ref{rigid2} illustrates the start of the described construction in the case $N=2$. The Cantor set $\mathcal{C}_{1}$ arises from tori in the upper half of the figure, and the Cantor set set $\mathcal{C}_{2}$ arises from tori in the lower half of the figure.

Note that $\mathcal{C}_{1}$ and $\mathcal{C}_{2}$ intersect in $p$. The local genus of $p$ in each of these $2$ Cantor sets is at least 1 by reasoning similar to that in Remark \ref{localgenus}.

The $\mathcal{C}_{i}$  satisfy the conditions of Theorem \ref{Slicing}. It follows that the genus of $p$ in $\mathcal{C}$ is $\geq$ 2. 

\emph{Note:} In fact, the local genus of $p$ in $\mathcal{C}$ is $\infty$.
\end{proof}

\begin{lem}
\label{homeomorphism-determination}
If $\Gamma$ is a self homeomorphism of $S^3$ and $\Gamma (C(v(g)))=C(v(g^{\,\prime}))$ for some $g, g^{\,\prime}\in F_{N}$ with $g^{\,\prime}=g\circ g_{1}$ then $\Gamma\vert_{C(v(g))}=H_{g_{1}}\mathstrut\vert_{C(v(g))}$. 
In particular, if $\Gamma (C(v(e)))=C(v(g))$ for some $g\in F_{N}$, then $\Gamma\vert_{C(v(e))}=H_{g}\vert_{C(v(e))}$. 
\end{lem}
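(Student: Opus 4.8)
The plan is to reduce the entire statement to the rigidity of the single Antoine Cantor set $C(v(e))$, exploiting that every vertex Cantor set is a translate $C(v(g))=H_g(C(v(e)))$ of it. Since $C(v(e))$ is rigidly embedded, the only self-homeomorphism of $C(v(e))$ that extends over $S^3$ is the identity; so if I can build out of $\Gamma$ a self-homeomorphism of $S^3$ that carries $C(v(e))$ onto \emph{itself}, that map must fix $C(v(e))$ pointwise, and unwinding will pin down $\Gamma$ on $C(v(g))$.

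Concretely, I would set $\Phi=H_{g'}^{\,-1}\circ\Gamma\circ H_g$, a self-homeomorphism of $S^3$ (each $H_g$ is a homeomorphism of $\mathbb{R}^3$ extending over the point at infinity). First I would verify that $\Phi$ maps $C(v(e))$ onto itself: using $H_g(C(v(e)))=C(v(g))$, the hypothesis $\Gamma(C(v(g)))=C(v(g'))$, and $H_{g'}(C(v(e)))=C(v(g'))$, one gets
\[
\Phi(C(v(e)))=H_{g'}^{\,-1}\bigl(\Gamma(H_g(C(v(e))))\bigr)=H_{g'}^{\,-1}(C(v(g')))=C(v(e)).
\]
Thus $\Phi|_{C(v(e))}$ is a self-homeomorphism of the rigid Cantor set $C(v(e))$ that extends to the homeomorphism $\Phi$ of $S^3$, so rigidity forces $\Phi|_{C(v(e))}=\mathrm{id}$.

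To finish, I would unwind this identity. For each $y\in C(v(g))$ write $y=H_g(x)$ with $x=H_g^{\,-1}(y)\in C(v(e))$; then $\Phi(x)=x$ reads $H_{g'}^{\,-1}(\Gamma(y))=H_g^{\,-1}(y)$, that is, $\Gamma(y)=H_{g'}\circ H_g^{\,-1}(y)$. By Remark \ref{associativity R3}, $H_{g'}\circ H_g^{\,-1}=H_{g'\circ g^{-1}}$, so with $g_1=g'\circ g^{-1}$ we obtain $\Gamma|_{C(v(g))}=H_{g_1}|_{C(v(g))}$. The particular case is the instance $g=e$, where $g_1=g'=g$ and the statement becomes $\Gamma|_{C(v(e))}=H_g|_{C(v(e))}$.

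The core of the argument is short; the real care points are bookkeeping. Since the $H_g$ implement left translation ($H_g(C(v(h)))=C(v(gh))$), I must keep the composition order straight so that the net translation relating $C(v(g))$ and $C(v(g'))$ comes out as $g_1=g'\circ g^{-1}$, and I must check the setwise invariance $\Phi(C(v(e)))=C(v(e))$ \emph{exactly} (not merely up to equivalence), since that is precisely what licenses the appeal to rigidity. The one conceptual step to get right is recognizing that rigidity of $C(v(e))$, transported by the homeomorphisms $H_g$, is already strong enough to rigidify $\Gamma$ on each vertex Cantor set; once $\Phi$ is assembled correctly, the conclusion is immediate.
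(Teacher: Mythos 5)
Your proof is correct and follows essentially the same route as the paper: compose $\Gamma$ with the ambient homeomorphisms $H_g$, $H_{g'}$ to produce a self-homeomorphism of $S^3$ carrying a rigid vertex Cantor set onto itself, invoke rigidity to conclude it is the identity there, and unwind to identify $\Gamma$ with a translation on $C(v(g))$. One bookkeeping point in your favor: your computation yields $g_1=g'\circ g^{-1}$, i.e.\ $g'=g_1\circ g$, which is the order forced by the left-translation convention $H_{g_1}(C(v(g)))=C(v(g_1\circ g))$ and is the order the paper's own proof actually uses, even though the lemma's hypothesis is printed as $g'=g\circ g_1$ (a harmless swap only in the abelian case).
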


\begin{proof}

$H_{g_1}(C(v(g)))=H_{g_1}\left( H_{g}(C(v(e)))\right)=H_{g_{1}\circ g}(C(v(e)))=
H_{g^{\,\prime}}(C(v(e)))=C(v(g^{\,\prime}))$. 
So both $H_{g_{1}}$ and $\Gamma$ take $C(v(e))$ onto $C(v(g^{\,\prime}))$. 
Thus $H_{g_{1}}\circ \Gamma^{-1}$ takes $C(v(e))$ onto $C(v(e))$. The rigidity of $C(v(e))$ implies that $H_{g_1}\vert_{C(v(e))}=\Gamma\vert_{C(v(e))}$.
\end{proof}

\begin{thm}[Main Theorem]
$\mathcal{M}$ is an irreducible 3-manifold with end homogeneity group $F_{N}$.
\end{thm}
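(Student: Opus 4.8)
The plan is to prove the two directions separately: first that every $H_g$ for $g \in F_N$ induces a self-homeomorphism of $\mathcal{C}$ (so that $F_N$ embeds into the embedding homogeneity group), and then that every self-homeomorphism $\Gamma$ of $S^3$ preserving $\mathcal{C}$ agrees, up to its action on the Cantor set, with exactly one such $H_g$. Since irreducibility of $\mathcal{M}$ has already been established in Section~\ref{Theoremsection} via Remark~\ref{irreducible}, the entire burden is the identification of the end homogeneity group with $F_N$, and by the correspondence between end homeomorphisms of $\mathcal{M}$ and embedding homeomorphisms of $\mathcal{C}$, it suffices to compute the embedding homogeneity group of $\mathcal{C} \subset S^3$.

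For the first (easy) direction I would observe that each $H_g \colon \mathbb{R}^3 \to \mathbb{R}^3$ extends to $S^3$ fixing $p$, and by construction $H_g$ permutes the pieces $\{C(v(g_i))\}$ and $\{C_i(v(g_i),\beta_k)\}$ of $\mathcal{C}$ according to the graph automorphism $h_g$ of $G(F_N)$; hence $H_g(\mathcal{C}) = \mathcal{C}$. By Remark~\ref{associativity R3}, $g \mapsto H_g|_{\mathcal{C}}$ is a homomorphism, and it is injective because $H_g$ moves $C(v(e)) = C(v(e))$ to $C(v(g))$, which is a distinct piece of $\mathcal{C}$ whenever $g \neq e$. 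This gives an injection $F_N \hookrightarrow$ (embedding homogeneity group).

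For the converse direction, let $\Gamma$ be any self-homeomorphism of $S^3$ with $\Gamma(\mathcal{C}) = \mathcal{C}$. By Lemma~\ref{GenusLemma}, $\Gamma(p) = p$, so $\Gamma$ permutes the remaining pieces, each of which is one of the rigid Antoine Cantor sets $C(v(g))$ or $C_i(v(g),\beta_k)$. The strategy is to show $\Gamma$ must send $C(v(e))$ to some $C(v(g))$: by Remark~\ref{inequivalent2} the vertex-pieces $C(v(g))$ are never equivalent to any edge-piece $C_i(v(g'),\beta_k)$, and Sher's Theorem~\ref{Sher} forbids $\Gamma$ from carrying one inequivalently embedded rigid Antoine set onto an inequivalent one; combined with the inequivalence data of Remarks~\ref{inequivalent} and~\ref{inequivalent2}, this pins down which pieces can map to which. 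The combinatorial heart is to promote the piecewise matching into a genuine graph automorphism of $G(F_N)$: using Lemma~\ref{TechLemma} to control how images of linked tori sit inside target tori (forcing the adjacency/linking pattern of the edge-chains to be preserved), I would show $\Gamma$ induces an automorphism of the labeled, oriented Cayley graph, and that every such automorphism of $G(F_N)$ is left-translation by a unique $g_1 \in F_N$. Once $\Gamma(C(v(e))) = C(v(g_1))$ is established, Lemma~\ref{homeomorphism-determination} gives $\Gamma|_{C(v(e))} = H_{g_1}|_{C(v(e))}$, and propagating this piece-by-piece across all vertices and edges (again via equivariance of the construction under the $H_g$) yields $\Gamma|_{\mathcal{C}} = H_{g_1}|_{\mathcal{C}}$.

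I expect the main obstacle to be the rigidity of the edge-structure: showing that $\Gamma$ cannot scramble the three-torus chains $C_1, C_2, C_3$ along an edge or reroute an edge to a non-adjacent vertex pair. This is precisely where Lemma~\ref{TechLemma} does its work — controlling geometric indices forces linked pieces to stay linked and a Cantor set whose outer torus links another to map to one linking the image — and where the asymmetry encoded by making $C_1$, $C_2$, $C_3$ pairwise inequivalent (Remark~\ref{inequivalent}) prevents any flip or internal permutation of the chain. Carefully checking that these local constraints assemble into a \emph{global} label-preserving graph automorphism, and that the only such automorphisms are the left translations (so no exotic symmetry of the free group's Cayley graph sneaks in beyond $F_N$ acting on itself), is the delicate step; the orientation and labeling conventions set up in Section~\ref{Cayleysection} are exactly what rule out extra automorphisms.
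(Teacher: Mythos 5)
Your overall route is the paper's: reduce to computing the embedding homogeneity group of $\mathcal{C}\subset S^3$, inject $F_N$ via the $H_g$, fix $p$ by Lemma~\ref{GenusLemma}, identify $\Gamma(C(v(e)))$ with some $C(v(g))$, apply Lemma~\ref{homeomorphism-determination}, and propagate along the linking structure. The one genuine gap is the sentence ``$\Gamma(p)=p$, so $\Gamma$ permutes the remaining pieces, each of which is one of the rigid Antoine Cantor sets.'' That does not follow: a self-homeomorphism of the Cantor set $\mathcal{C}$ has no a priori reason to respect the decomposition into the countably many sub-Cantor-sets $C(v(g))$ and $C_i(v(g),\beta_k)$ --- it could carry $C(v(e))$ onto a clopen set meeting infinitely many of them, or onto a proper clopen subset of one of them. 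Everything you do afterwards (Sher's Theorem~\ref{Sher}, the inequivalence bookkeeping of Remarks~\ref{inequivalent} and~\ref{inequivalent2}) presupposes that $\Gamma$ carries whole pieces onto whole pieces, so your argument is circular at exactly this point.

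Establishing piece-preservation is in fact the main technical content of the paper's Step 1, and it is where Lemma~\ref{TechLemma} is actually deployed (you reserve it instead for the adjacency of the edge-chains, which is the easier Step 2). The paper first traps $\Gamma(C(v(e)))$ in a finite union $\Delta_{N_1}$ of pieces and $\Gamma^{-1}(\Delta_{N_1})$ in $\Delta_{N_2}$, then adjusts $\Gamma$ by a general-position homeomorphism $k$ fixing $\mathcal{C}$ so that the images of the first- and second-stage torus boundaries miss those of every target piece. With that set up, Lemma~\ref{TechLemma} gives a trichotomy on the geometric index of $\Gamma'(T(C(v(e))))$ in the torus $T$ containing an image point: index $0$ is excluded by chasing a chain of linked tori out to word length $N_2+1$ and contradicting $\Gamma'(T(C(v(g'))))\subset \Delta_{N_1}$; index $\geq 2$ is excluded inside the lemma; and index $1$ forces $\widetilde{C}\subset \Gamma'(T(C(v(e))))$, whence $\Gamma(C(v(e)))=\widetilde{C}$ is a single piece. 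Only then can the rigidity and inequivalence arguments you describe take over; you need to add this step (or an equivalent one) explicitly, since without it the proof does not close.
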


This follows immediately from the next theorem.

\begin{thm}
\label{MT 2}
$\mathcal{C}\subset S^3$ is unsplittable and the embedding homogeneity group of $\mathcal{C}\subset S^3$ is $F_{N}$.
\end{thm}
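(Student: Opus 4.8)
The plan is to prove the two assertions separately, writing $\mathrm{Homeo}(\mathcal{C})$ for the embedding homogeneity group, and to put essentially all the work into the reverse inclusion $\mathrm{Homeo}(\mathcal{C})\subseteq F_N$. Unsplittability I would dispatch first, as in Remark~\ref{separation}: by construction the outer tori of adjacent pieces are linked (each $T_1(v(e),\beta_k)$ with $T(C(v(e)))$, the three edge tori pairwise along a chain, and $T_3(v(e),\beta_k)$ with $T(C(\beta_k))$), so the full system of unknotted solid tori forms a connected network of pairwise-linked tori accumulating only at $p$. Since a $2$-sphere cannot separate two linked tori (nor the rigid Antoine sets inside them), and cannot isolate $p$ from the pieces clustering at it, no $2$-sphere separates points of $\mathcal{C}$.

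For the inclusion $F_N\hookrightarrow\mathrm{Homeo}(\mathcal{C})$, each $H_g$ carries $\mathcal{C}$ onto itself, permuting pieces by $C(v(x))\mapsto C(v(g\circ x))$ and $C_i(v(x),\beta_k)\mapsto C_i(v(g\circ x),\beta_k)$ and fixing $p$; hence $g\mapsto H_g|_{\mathcal{C}}$ is a homomorphism into $\mathrm{Homeo}(\mathcal{C})$ by Remark~\ref{associativity R3}. It is injective, since $H_g|_{\mathcal{C}}=\mathrm{id}$ forces $C(v(g))=H_g(C(v(e)))=C(v(e))$, and the disjointness of $B(v(g))$ and $B(v(e))$ for $g\neq e$ gives $g=e$.

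The heart of the argument is that any $\Gamma\in\mathrm{Homeo}(\mathcal{C})$, extended over $S^3$, satisfies $\Gamma|_{\mathcal{C}}=H_g|_{\mathcal{C}}$ for some $g$. First $\Gamma(p)=p$ by Lemma~\ref{GenusLemma}, as $p$ is the only point of local genus $\geq 2$. Next I would show $\Gamma$ permutes the individual rigid Antoine pieces by tracking the geometric index of the image of each outer torus inside the torus it meets and invoking Lemma~\ref{TechLemma}, so that (after an ambient isotopy as in Remark~\ref{isotopy}) each piece maps onto a single piece. Remarks~\ref{inequivalent} and \ref{inequivalent2} then constrain the combinatorics: $\Gamma$ sends vertex pieces to vertex pieces and edge pieces $C_i(v(x),\beta_k)$ to edge pieces with the same index $i$ and with $\beta_\ell\in\{\beta_k,\overline{\beta_k}\}$, so $\Gamma$ preserves the unoriented generator (color) of each edge and the labels $C_1,C_2,C_3$. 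Writing $\Gamma(C(v(x)))=C(v(\phi(x)))$, linked tori go to linked tori, so $\phi$ is a color-preserving graph automorphism of $G(F_N)$; and since $C_1$ and $C_3$ are inequivalent while $C_1$ sits at the tail and $C_3$ at the head of a positively oriented edge, $\Gamma$ cannot reverse an edge, so $\phi$ preserves the oriented, colored edge structure. Tracing the unique reduced colored path from $v(e)$ to $v(x)$ and its image shows such an automorphism is exactly left multiplication $\phi(x)=g\circ x$ with $g=\phi(e)$. Thus $\Gamma(C(v(x)))=C(v(g\circ x))=H_g(C(v(x)))$, and likewise on edge pieces; applying Lemma~\ref{homeomorphism-determination} to each vertex piece and the analogous rigidity argument to each edge piece gives $\Gamma|_A=H_g|_A$ on every piece $A$, which with $\Gamma(p)=p=H_g(p)$ yields $\Gamma|_{\mathcal{C}}=H_g|_{\mathcal{C}}$.

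The main obstacle is the step that $\Gamma$ carries individual pieces onto individual pieces and respects adjacency: because the pieces are linked rather than separated, one cannot split them off with $2$-spheres, and must instead control the geometric index of each image outer torus $\Gamma(T(A))$ inside the torus $T(B)$ meeting it. This is exactly the role of Lemma~\ref{TechLemma}, where the linking of adjacent pieces and the intersection condition~\textbf{**} are used to eliminate the degenerate index-$0$ alternatives and force the index-$1$, containment conclusion identifying $\Gamma(A)$ with a single $B$ and matching the linking pattern. A secondary subtlety worth flagging is that color preservation alone is insufficient (already for $N=1$ it would allow orientation-reversing automorphisms of the line); it is precisely the inequivalence of $C_1$ and $C_3$ that rules these out and pins the automorphism group of the labeled tree down to $F_N$.
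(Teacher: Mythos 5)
Your proposal is correct and follows essentially the same route as the paper: fix $p$ via Lemma \ref{GenusLemma}, use the geometric index analysis of Lemma \ref{TechLemma} to show pieces map onto pieces, use Remarks \ref{inequivalent} and \ref{inequivalent2} to pin down the induced map on the Cayley graph as left multiplication by some $g$, and finish with the rigidity statement of Lemma \ref{homeomorphism-determination} on each piece. Your explicit observation that the inequivalence of $C_1$ and $C_3$ is what forbids edge reversal is a nice articulation of what the paper leaves implicit in its inductive ``working outward'' step, but it is the same argument.
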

\begin{proof}
The unsplittability was addressed above. Let $\mathcal{H}$ be the embedding homogeneity group of 
$\mathcal{C}$. Note that the homeomorphisms $H_{g}:\mathbb{R}^3\rightarrow \mathbb{R}^3, g\in F_{N}$, extend to homeomorphisms
$H^{\prime}_{g}:S^3\rightarrow S^3, g\in F_{N}$, by fixing the point at infinity.

For each $g\in F_{N}$, let $\eta_{g}=H^{\prime}_{g}\vert_{\mathcal{C}}$. Then each $\eta_{g}$ takes $\mathcal{C}$ onto $\mathcal{C}$. This follows directly from the definition of $C(v(g))=H_{g}(C(v(e)))$ and 
$C_{i}(v(g),\beta_k)=H_{g}(C_{i}(v(e),\beta_{k}))$. The elements $\{\eta_{g}, g\in F_{N}\}$ form a subgroup of $\mathcal{H}$ since
$H^{\prime}_{g_1}\circ H^{\prime}_{g_2}=H^{\prime}_{g_1\circ g_2}$ by Remark \ref{associativity}.

Let $\gamma$ be any self homeomorphism of $\mathcal{C}$ that extends to a homeomorphism $\Gamma$ of $S^3$. We will show that  $\gamma =\eta(g)$ for some $g\in F_{N}$.

\textbf{Step 1}: We first show that $\Gamma (C(v(e))) = C(v(g))$ for some $g\in F_{N}$. If this is true, then  the definition of $C(v(g))$ as $H_{g}(C(v(e)))$ and Lemma \ref{homeomorphism-determination} shows that $\Gamma \vert_{C(v(e))}=\eta_{g}\vert_{C(v(e))}$ Let:
\begin{align*}
\Lambda_{N}=
\{ T(C(v(g)))\ \vert \parallel v(g)\parallel \leq N\}&\ \bigcup\ 
\{ T(C_{j}(v(g), \beta_{k}))\ \vert \parallel v(g)\parallel \leq N\},\\
\Delta_{N}=
\{C(v(g)),  \vert \parallel v(g)\parallel \leq N \}&\ \bigcup\ 
\{ C_{j}(v(g), \beta_{k})\ \vert \parallel v(g)\parallel \leq N\}, \text{and}\\
\Lambda^{\prime}_{N}=
\{ T^{\prime}(C(v(g)))\ \vert \parallel v(g)\parallel \leq N\}&\ \bigcup\ 
\{ T^{\prime}(C_{j}(v(g), \beta_{k}))\ \vert \parallel v(g)\parallel \leq N\}.
\end{align*}

$\Gamma(C(v(e)))$ does not contain
$p$ by Lemma \ref{GenusLemma}. So there is a positive  integer $N_{1}$ such that
$\Gamma(C(v(e)))\subset \Delta_{N_{1}}$.
 Similarly, there is a positive integer $N_{2}>N_{1}$ such that
$\Gamma^{-1}(\Delta_{N_{1}})\subset \Delta_{N_{2}}$.

Using techniques similar to those in \cite{Sh68}, or \cite{GRW14},
  choose a homeomorphism $k$ of $S^{3}$ to itself, fixed on $\mathcal{C}$,
 so that 
 \begin{align}
 k
 \left(
\Gamma
 \left(
 \partial (\Lambda_{N_2+1})
 \cup
\partial(\Lambda^{\prime}_{N_2+1})
\right)
\right)
\cap
\left(
\partial (\Lambda_{N_2+1})
\cup
\partial( \Lambda^{\prime}_{N_2+1})
\right)
 =\emptyset.
\label{boundary}
 \end{align}
Let $\Gamma^{\,\prime}=k\circ \Gamma$. Let $c$ be a point of $C(v(e))$ and let $\Gamma^{\,\prime}(c)=\Gamma(c)=q\in \widetilde{C}$ where $\widetilde{C}=C(v(g))$ (or $= C_{i}(v(g),\beta_k)$) for some $g$ and where $T=T(C(v(g)))$ (or $= T(C_{i}(v(g),\beta_k))$).
We will show that in fact, $\Gamma^{\,\prime}(C(v(e)))=\Gamma(C(v(e)))=C(v(g))$ for some $g$, and that the second case above cannot occur.  Observe that
either $\Gamma^{\,\prime}(T(C(v(e))))\subset {\rm{Int}} T$ or
${\rm{Int}}(\Gamma^{\,\prime}(T(C(v(e)))))\supset T$ by Equation \ref{boundary} above.

\textbf{Step 1a:} 
Assume that $\Gamma^{\,\prime}(T(C(v(e))))\subset {\rm{Int}}(T)$. Lemma \ref{TechLemma} now applies with $T(C(v(e)))=T_1, \text{ any }T_{1}(v(e),\beta_i)=T_2, \text{ and with } T=T_3$. If $\Gamma^{\,\prime}(T(C(v(e))))$ has
geometric index $0$ in $T_{3}$, choose a linked chain of tori in the construction of $C(G(F_{N}))$ joining $T(C(v(e)))$ to a $T(C(v(g^\prime)))$ where $|| g^\prime || = N_2+1$. By repeated applications of Lemma \ref{TechLemma} it follows
that $\Gamma^{\,\prime}(T(C(v(g^\prime))))\subset \Delta_{N_1}$ which is a contradiction.
So   $\Gamma^{\,\prime}(T(C(v(g))))$ has geometric index greater than or equal to $ 1$ in $T$. By  Lemma \ref{TechLemma}, this geometric index is 1 and  $\widetilde{C}\subset  \Gamma^{\,\prime}(T(C(v(g))))$.

Since $\Gamma(\mathcal{C})=\mathcal{C}$,
we have  $\widetilde{C}=\Gamma(C(v(e))$.
Since $C(v(e))$ in not equivalent to any $C_{i}(v(g),\beta_k)$  by Remarks \ref{inequivalent} and \ref{inequivalent2}
we get 
$\widetilde{C}=C(v(g))$ for some $g$. It follows from Lemma \ref{homeomorphism-determination} that $\Gamma \vert_{C(v(e))}=\eta_{g}\vert_{C(v(e))}$.

\textbf{Step 1b:}  Assume that ${\rm{Int}}(\Gamma^{\,\prime}(T(C(v(e)))))\supset {\rm{Int}}T$. 
Then ${(\Gamma^{\,\prime})^{-1}}(T)\subset{\rm{Int}}(T(C(v(e)))$. The argument
from Case I can now be repeated replacing $\Gamma^{\,\prime}$ by $(\Gamma^{\,\prime})^{-1}$
and interchanging $T$ and $T(C(v(e)))$. It follows that
$(\Gamma^{\,\prime})^{-1}(\widetilde{C})=C(v(e))$ and so $\Gamma^{\,\prime}(C(v(e))=C(v(g))$ for some $g$ as claimed.
Again it follows from Lemma \ref{homeomorphism-determination} that $\Gamma \vert_{C(v(e))}=\eta_{g}\vert_{C(v(e))}$.

\textbf{Step 2}:  By an argument similar to that in Step 1, for each $g_i\in F_{N},$ $\Gamma (C(v(g_i)))$ $=C(g_{m(i)}\circ g_i)$ for some $g_{m(i)}\in F_{N}$.  Also for each $i$, $j$ and $k$,  
\\
\centerline{$\Gamma (C_{j}(v(g_i)\beta_k))$ $=C_{j}(v(g_{m(i,k)}\circ g_i),\beta_k)$ for some $g_{m(i,k)}\in F_{N}$.}
Working inductively outward from $C(v(e))$, and using the fact that if $T_1$ and $T_2$ are linked tori, then $\Gamma(T_1)$ and $\Gamma(T_2)$ are linked, one sees that each $g_{m(i)}=g$ and each $g_{m(i,k)}=g$.

By Lemma \ref{homeomorphism-determination} it follows that each $\Gamma
\vert_{C_{j}(v(g_i)\beta_k)}=\eta_{g}\vert_{C_{j}(v(g_i)\beta_k)}$, and that
each  $\Gamma
\vert_{C(v(g_i))}=\eta_{g}\vert_{C(v(g_i))}$
It now follows that $\Gamma =\eta(g)$ for the $g$ from Steps 1 and  2.
This shows that $\{\eta(g), g\in F_{N}\}$ is in fact, the group $\mathcal{H}$ and that $\mathcal{H}$ is isomorphic to $F_{N}$. 
\end{proof}

The method used in the construction of $\mathcal{C}$ can be used for any Cayley graph of a finitely generated group $F$ for which the graph automorphisms have certain extension properties. This observation is given in the next theorem.

\begin{thm}
\label{generalization}Suppose $F$ is a finitely presented group with a Cayley Graph $G(F)$ in $S^3$. Suppose the graph automorphisms $h_g$ for $g\in F$ can be extended to homeomorphisms $H_g$ of $S^3$. Suppose also
that for every pair of group elements $g_1, g_2$,  $H_{g_1}\circ H_{g_2} = H_{g_1\circ g_2}$. Then there is an open irreducible 3-manifold $M_F$ with embedding homogeneity group isomorphic to $F$.
\end{thm}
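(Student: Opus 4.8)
The plan is to observe that the proof of Theorem \ref{MT 2} used the free group $F_{N}$ only through the three properties now taken as hypotheses: the embedded Cayley graph $G(F)$, the extension of each graph automorphism $h_g$ to a homeomorphism $H_g$ of $S^3$, and the composition law $H_{g_1}\circ H_{g_2}=H_{g_1\circ g_2}$. So I would repeat the construction of Section \ref{Cantorsection} verbatim in this generality. Fix a rigid Antoine Cantor set $C(v(e))$ in a small ball about the identity vertex, together with three mutually linked rigid Antoine Cantor sets along each edge incident to $v(e)$, chosen so that all of these building blocks are pairwise inequivalently embedded (as in Remarks \ref{inequivalent} and \ref{inequivalent2}). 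For a general vertex set $C(v(g))=H_g(C(v(e)))$, and transport the edge Cantor sets by the corresponding $H_g$. The essential point is that the composition hypothesis makes $C(v(g))$ depend only on the group element $g$ and not on any word representing it, so the construction is well defined even when $F$ has relations; this is exactly where uniqueness of reduced words was invoked for $F_{N}$. Adjoining the limit points of the graph in $S^3$ yields a Cantor set $\mathcal{C}$, and we set $M_F=S^3-\mathcal{C}$.

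Next I would check that the three technical lemmas survive. Lemma \ref{TechLemma} is a statement purely about geometric indices of linked Antoine Cantor sets and nowhere mentions the group, so it applies unchanged. Lemma \ref{homeomorphism-determination} uses only the rigidity of $C(v(e))$ together with the identity $H_{g_1}(C(v(g)))=C(v(g_1\circ g))$, which follows from the composition hypothesis; hence it too carries over, giving $\Gamma|_{C(v(g))}=H_{g_1}|_{C(v(g))}$ whenever $\Gamma(C(v(g)))=C(v(g_1\circ g))$. For the analog of Lemma \ref{GenusLemma} I would reprove that any extendable self-homeomorphism of $\mathcal{C}$ fixes the adjoined limit points, by the same local-genus slicing argument via Theorem \ref{Slicing}: the vertex and edge Cantor sets have local genus at most one, while each limit point can be separated into two subcollections meeting only at that point, forcing local genus at least two there.

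With the lemmas in hand, the main argument of Theorem \ref{MT 2} transfers directly. I would set $\eta_g=H^{\prime}_g|_{\mathcal{C}}$; these take $\mathcal{C}$ onto $\mathcal{C}$ and, by the composition law, form a subgroup of the embedding homogeneity group $\mathcal{H}$ isomorphic to $F$ (injectivity holds because distinct group elements move $C(v(e))$ to distinct, and by construction distinguishable, vertex Cantor sets). For surjectivity, given an arbitrary extendable $\gamma$ with extension $\Gamma$, Steps 1 and 2 go through: after adjusting $\Gamma$ by a homeomorphism fixed on $\mathcal{C}$ so that the relevant boundary tori are in general position (Equation \ref{boundary}), the geometric-index dichotomy of Lemma \ref{TechLemma} forces $\Gamma(C(v(e)))=C(v(g))$ for a single $g\in F$, inequivalence of the building blocks rules out the edge Cantor sets, and an inductive propagation outward along linked tori shows $\Gamma$ agrees with the single $\eta_g$ on all of $\mathcal{C}$. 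Hence $\mathcal{H}=\{\eta_g\}\cong F$, and irreducibility of $M_F$ follows as in Remark \ref{irreducible}.

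The step I expect to be the main obstacle is the analog of Lemma \ref{GenusLemma}: for $F_{N}$ the graph had a single point at infinity, but a general Cayley graph embedded in $S^3$ may accumulate on a more intricate limit set, and one must both adjoin these limit points correctly and verify the local-genus inequality (together with the unsplittability underlying irreducibility) at each of them. Arranging the embedding of $G(F)$ to be \emph{nice enough} that this limit structure is controlled --- so that both the slicing argument and the inductive outward propagation succeed --- is the delicate part, and is presumably what the phrase ``certain nice extension properties'' is meant to guarantee.
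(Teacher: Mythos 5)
Your proposal matches the paper's approach exactly: the paper's own proof of this theorem is a single sentence asserting that the construction of Section \ref{Cantorsection} and the argument of Theorem \ref{MT 2} carry over, which is precisely what you spell out in detail. Your closing observation about the limit set of a general Cayley graph and the analog of Lemma \ref{GenusLemma} correctly identifies the delicate point that the paper leaves implicit in the phrase ``certain nice extension properties.''
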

\begin{proof}
The construction of a collection of Cantor sets modeled on the graph, and the proof  analogous to the proof of  Theorem \ref{MT 2} go through in this case.
\end{proof}

\section*{Questions}

In \cite{GaRe14}, the authors showed that for every finitely generated abelian group $G$, there is an irreducible open 3-manifold $M_G$ with end homogeneity group isomorphic to $G$.  In this paper, we show that for any finitely generated free group $F_{N}$, there is an irreducible open 3-manifold $M_{F_{N}}$ with end homogeneity group $F_{N}$. 

\begin{que2}[\textbf{1}]
Which non-abelian finitely generated groups can arise as end homogeneity groups of (irreducible) open 3-manifolds?
\end{que2}

In light of Theorem \ref{generalization}, this leads to the following question.

\begin{que2}[\textbf{2}]
Which non-abelian finitely generated groups have Cayley graphs embeddable in $S^3$ with the properties listed in Theorem \ref{generalization}?
\end{que2}

A related question is  the following.

\begin{que2}[\textbf{3}]
If not all finitely generated non-abelian groups have Cayley graphs with the properties listed in Theorem \ref{generalization}, is there a way to characterize which ones do have this property?
\end{que2}

\section*{Appendix}
This section lists some of the important notation used in the paper.

\begin{itemize}
\item
$F_{N}$ is the free group on N generators.
\item
$g(F_{N}) = \{a_1,\ldots,a_N, \overline{\mathstrut a_1},\ldots, \overline{\mathstrut a_N}\}$, is the full set of generators for $F_{N}$. 
\item $\beta_j = \alpha_j$ for $1\leq j \leq N$ and  $\beta_j =\overline{\mathstrut \alpha_{j-N}}$ for $N+1\leq j \leq 2N$.
\item $G(F_{N})$ is the Cayley graph constructed in $\mathbb{R}^3$ for  $F_{N}$.
\item $v(e), v(\beta_j), v(g)$ are vertices of $G(F_{N})$ associated with group elements.
\item $E(v(g), \beta_k) =E((v(g),v(g\circ \beta_k)))$ is the oriented edge labeled $\beta_k$ from $v(g)$ to $v(g\circ \beta_k)$.\\ 
This edge is the regarded the same as  
$E(v(g\circ \beta_k), \overline{\beta_k}) =E(v(g\circ \beta_k),v(g))$.
\item $h_{g}$ is the automorphism  of $G(F_{N})$ given by $h_{g}(v(g_i))=v(g\circ g_i)$,\\
$h_{g}(E(v(g_i), \beta_k))=E(v(g\circ g_i),\beta_k)$.
\item $H_g$ is the extension of $h_{g}$ to a homeomorphism of $\mathbb{R}^3$.
\item $B(v(g))=H_{g}(B(v(e)))$ is a 3-ball about the vertex $v(g)$.
\item $D(g,\beta_k)=H_{g}(D(e,\beta_k))$ is a tube joining $B(v(g))$ to $B(v(g\circ \beta-k))$.
\item $C(v(g))=H_{g}(C(v(e))$ is a rigid Cantor set in $B(v(g))$.
\item $C_{1}(v(g),\beta_k), C_{2}(v(g),\beta_k), and C_{3}(v(g),\beta_k)$, form a linked chain of rigid Cantor sets joining $C(v(g))$ to $C(v(g\circ \beta_k))$. $C_{i}(v(g),\beta_k)=H_{g}(C_{i}(v(e)), \beta_k)$.
\item $C(G(F_{N}))=\mathcal{C}$. $M_{F_{N}}=\mathcal{M}$.
\end{itemize}

\section*{Acknowledgements}
The authors would like to thank the referee for many helpful suggestions and comments. These suggestions helped to clarify the arguments in the paper.
Both authors were
supported in part by the Slovenian Research Agency grant
BI-US/19-21-024. The second author was supported in part by the
Slovenian Research Agency grants  P1-0292, N1-0114, N1-0083, N1-0064, and J1-8131.


\def\cprime{$'$}






\end{document}